\documentclass{article}

\usepackage{arxiv}

\usepackage[utf8]{inputenc} 
\usepackage[T1]{fontenc}    
\usepackage{hyperref}       
\usepackage{url}            
\usepackage{booktabs}       
\usepackage{amsfonts}       
\usepackage{nicefrac}       
\usepackage{microtype}      
\usepackage{doi}
\usepackage{enumitem}
\usepackage{style}
\usepackage{graphicx}
\usepackage{tikz}
\usepackage{doi}
\usepackage[normalem]{ulem}

\title{The Zipped Finite Element Method on polyhedral meshes}

\author{ \href{https://orcid.org/0000-0001-8642-4258}{\includegraphics[scale=0.06]{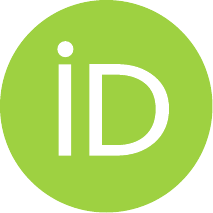}\hspace{1mm}Stefano~Berrone} \\
	Dipartimento di Scienze Matematiche
	``G. L. Lagrange'', \\Politecnico di Torino, \\
    Corso Duca degli Abruzzi, 24, 10129, Turin, Italy\\
	\texttt{stefano.berrone@polito.it} \\
	\And
	\href{https://orcid.org/0000-0002-8540-3639}{\includegraphics[scale=0.06]{orcid.pdf}\hspace{1mm}Gioana~Teora} \\
	Dipartimento di Scienze Matematiche
	``G. L. Lagrange'',\\ Politecnico di Torino, \\
    Corso Duca degli Abruzzi, 24, 10129, Turin, Italy \\
	\texttt{gioana.teora@polito.it} 
}

\renewcommand{\shorttitle}{The Zipped Finite Element Method on polyhedral meshes}
\hypersetup{
pdftitle={},
pdfsubject={},
pdfauthor={Stefano~Berrone, Gioana~Teora},
pdfkeywords={},
}

\begin{document}
\maketitle

\begin{abstract}
We extend the high-order Zipped Finite Element Method to three-dimensional polyhedral elements. The construction relies on a local sub-tetrahedralization obtained by triangulating the polygonal faces and connecting the resulting vertices to a suitable interior point. High-order shape functions are constructed as linear combinations of standard finite element basis functions, with coefficients chosen to ensure exact polynomial reproduction on the faces and in the element interior while preserving global $\con{0}{}$-conformity. We further introduce a QR-based strategy as a systematic alternative to the heuristic procedure that was adopted for the two-dimensional case. The QR-strategy is more systematic and robust than the heuristic strategy, while being independent of the spatial dimension and avoiding geometric alignment tests and problem-dependent tolerances. The resulting method is formulated for a diffusion-reaction problem and numerically assessed through three-dimensional convergence tests, confirming the expected optimal polynomial order of accuracy.
\end{abstract}

\keywords{Shape Functions, High-order, Polyhedral method, Star-shaped}

\section{Introduction}

Over the last two decades, the numerical solution of partial differential equations on domains with complex geometry has motivated a substantial shift away from the classical finite element paradigm, in which elements are restricted to simplices, quadrilaterals, or hexahedra. The Finite Element Method (FEM), in its standard formulation, offers a mature and well-understood theoretical framework \cite{ciarlet2002finite}, but its reliance on a limited set of reference element shapes can impose significant constraints when dealing with complex geometries, fractures, heterogeneous material interfaces, or adaptive refinement and coarsening strategies \cite{Benedetto2016, Vicini2024, GrappeinTeora2025}. General polytopal (i.e. polygonal and polyhedral) meshes remove this restriction, allowing elements with an arbitrary number of edges and faces, non-convex shapes, and hanging nodes. This flexibility has driven the development of an entire family of polytopal methods, among which the Virtual Element Method (VEM) \cite{LBe13}, the Hybrid High-Order (HHO) method \cite{DiPietroErnLemaire2014}, polygonal discontinuous Galerkin schemes \cite{Antonietti2021}, and, closer to the classical FEM philosophy, polygonal/polyhedral finite elements built on generalized barycentric coordinates \cite{Wachspress1975, FLOATER2003, Sukumar2004}.

A central and comparatively less resolved question within this landscape is how to write in a closed-form a set of basis functions able to reproduce polynomials of high order while preserving global $\con{0}{}$-conformity, particularly in three dimensions. In the two-dimensional setting, the problem is by now reasonably well understood: rational or generalized barycentric coordinates, such as Wachspress, mean-value, maximum-entropy, harmonic and Laplace coordinates \cite{floater2015generalized}, provide conforming elements on convex and, in some constructions, non-convex polygons, while serendipity-type reductions of the Degrees of Freedom (DOFs) to match those of a standard FEM on triangles have been derived explicitly \cite{SUKUMAR201327}. Within the Virtual Element Method itself, a further family of recent variants has emerged with the specific aim of computing the virtual basis functions explicitly rather than only through their projection and stabilization, thereby recovering pointwise-evaluable basis functions in two-dimensions. The reduced-basis Virtual Element Method (rbVEM) constructs cheap approximations of the virtual basis functions through a reduced-basis strategy, used both to design the stabilization term and to post-process the solution \cite{Credali2024}. The Lightning Virtual Element Method (LVEM) approximates the virtual basis function with rational functions \cite{TrezziZerbinati2024, Gopal2019}, whereas the Neural Approximated Virtual Element Method (NAVEM) solves the local Laplace problem that locally defines virtual functions by a neural network \cite{PintoreTeora2025, PintoreTeora2026}.

Extending this type of explicit, conforming construction to three-dimensional polyhedra is considerably more delicate. Several difficulties compound in 3D that have no direct two-dimensional analogue. First, a polyhedral element is bounded by faces that are themselves polygons of arbitrary shape. Any conforming high-order construction must therefore guarantee that the traces of the basis functions on shared faces coincide exactly between neighbouring elements, a compatibility condition that becomes more demanding as the number and irregularity of faces grows. Second, no general closed-form generalization of rational barycentric coordinates preserving linear precision is available for arbitrary, possibly non-convex, polyhedra, unlike the relatively mature two-dimensional theory. Third, the serendipity degree-of-freedom reduction, which in 2D can be characterized rather explicitly \cite{SUKUMAR201327}, becomes substantially more involved in 3D, where degrees of freedom are associated with vertices, edges, and faces. Da Veiga et al. \cite{DaVeigaBrezzi2018} address this issue in the context of the Virtual Element Method by restricting the serendipity reduction to the two-dimensional faces of a polyhedral partition and applying static condensation for removing bulk DOFs, because a fully three-dimensional reduction is considerably less straightforward.

Bishop \cite{Bishop2014} first introduces a first-order continuous-Galerkin scheme for general polytopes, with an arbitrary number of vertices, edges, and faces, possibly non-planar faces, and star-convexity with respect to the vertex-averaged centroid as the only shape restriction, in which conforming shape functions are obtained as a weighted combination of FEM functions defined over a sub-tetrahedralization of the element. This construction guarantees exact partition of unity, linear completeness, and inter-element conformity by design, but only the lowest-order case was addressed. The recently proposed Zipped Finite Element Method (Z-FEM) \cite{NevaTeora2025} follows a related line of work, extending this construction to build high-order shape functions on star-shaped polygons as linear combinations of standard finite element bases defined on a local sub-triangulation, and subsequently ``zipping'' the resulting space down to a serendipity-number of degrees of freedom, so that global $\con{0}{}$-conformity is exactly preserved by construction and the polynomial reproducibility can be exactly enforced. An independent and simultaneous work by Sellmann et al. \cite{SellmannWriggers2026} introduced the two-dimensional Projection Enhanced Triangular Finite Element Method (PET-FEM), which is based on the same idea as the Z-FEM, but it employs different types of degrees of freedom and sub-triangulations for the construction of high-order shape functions.

In this work, the extension of the Z-FEM to three dimensions is established. In 3D, the sub-tetrahedralization of the polyhedral element is obtained by first sub-triangulating the faces, according to the two-dimensional procedure developed in \cite{NevaTeora2025}, and then connecting the derived vertices to a suitably chosen interior point located within the kernel of the element. To guarantee $\con{0}{}$-conformity, the coefficients of the linear combinations are determined by solving a local optimization problem on each face, which guarantees reproducing the two-dimensional polynomials over each face of the polyhedron, and then polynomial reproducibility is enforced also in the bulk of the element. We remember that this local optimization problem can be efficiently decomposed into smaller sub-problems that consist of a set of linear systems characterized by the same factorizable coefficient matrix. Thus, the local space is designed so that polynomials of degree up to the method order can be exactly reproduced. This property is fundamental, as it allows the method to inherit the theoretical framework of the FEM, ensuring both well-posedness of the discrete problem and optimal a priori error estimates, but over arbitrary shape elements. The number of degrees of freedom is selected according to a lazy-serendipity criterion \cite{DaVeigaBrezzi2018} for faces, following \cite{NevaTeora2025}, with a straightforward extension to three dimensions. The heuristic procedure proposed in \cite{NevaTeora2025} selects, among the nodal coordinates of the underlying FEM space, the internal nodal coordinates that improve homogeneity and minimize alignment. While computationally inexpensive, this index-based approach for determining DOFs is heuristic and may fail. Here, we therefore introduce an alternative QR-strategy, which is more computationally demanding but more systematic and robust. Like the heuristic approach, it avoids the geometric complexity of classical serendipity constructions, requiring neither the identification of aligned entities nor problem-dependent geometric tolerances. The QR-strategy is also dimension-independent and can therefore be applied in arbitrary spatial dimensions

The outline of the paper is as follows. In Section \ref{sec:notationsmodel}, we introduce some preliminary notations and the PDE model problem. In Section \ref{sec:zippedfem}, we introduce the high-order Zipped Finite Element Space in three dimensions and define the dimension of the space. Section \ref{sec:shapefunctions} details the construction of the zipped basis functions and the related choice of degrees of freedom, whereas Section \ref{sec:discreteproblem} defines the discrete problem and the theoretical convergence properties of the method. Finally, Section \ref{sec:numericalexperiments} proposes some numerical experiments aimed at testing convergence findings, while Section \ref{sec:conclusion} draws the conclusion.

\section{Notations and the Model Problem}
\label{sec:notationsmodel}

Throughout the paper, the usual notation for Sobolev spaces is adopted. Given a generic polytope $\mathcal{O} \subset \R^{d}$, with $d=1,\dots,3$, we denote by $\scal[\mathcal{O}]{\cdot}{\cdot}$ the $\leb{2}{\mathcal{O}}$-scalar product. Furthermore, for each for $s \geq 0$, $\norm[\sob{s}{\mathcal{O}}]{\cdot}$ and $\seminorm[\sob{s}{\mathcal{O}}]{\cdot}$ denote the norm and seminorm of functions in $\sob{s}{\mathcal{O}}$, respectively.

Let us introduce the space $\Poly[d]{k}{\mathcal{O}}$ of $d$-dimensional polynomials of degree up to $k \in \N$ defined over $\mathcal{O}$, whose dimension is $n_k^d := \frac{(k+1)\dots(k+d)}{d!}$. We adopt the standard convection $\Poly[s]{-m}{\mathcal{O}} = \emptyset$ and $n^d_{-m} = 0$ for all $m \in \N$. Let us introduce the set of $d$-dimensional (scaled) monomials of degree up to $k$ on $\mathcal{O}$:
\begin{equation}
    \M[d]{k}{\mathcal{O}} = \left\{m_{\alpha}^d(\xx) = \left(\frac{\xx - \xx_{\mathcal{O}}}{h_{\mathcal{O}}}\right)^{\bm{\alpha}}: \bm{\alpha} = \mathbf{idx}(\alpha)\ \, \forall \alpha = 1,\dots,n_k^d \right\},
\end{equation}
where $\xx_{\mathcal{O}}$ represents the centroid of $\mathcal{O}$, $\mathbf{idx} : \{1,\dots, n_k^d\} \to \N^d$ is the natural function associating each natural number $\alpha \in \{1,\dots, n_k^d\}$ to a multi-index $\bm{\alpha} \in \N^d$, and the symbol $h_{\mathcal{O}} \coloneq \max_{\xx,\yy \in \mathcal{O}} \norm[2]{\xx-\yy}$ denotes the diameter of $\mathcal{O}$.

Let us now focus on the three-dimensional setting, i.e. $d=3$. Let $\Omega \subset \R^3$ be a bounded polyhedral domain with Lipschitz boundary $\Gamma \coloneq \partial \Omega$. We consider the following diffusion-reaction problem with homogeneous Dirichlet boundary conditions:
\begin{equation}
    \begin{cases}
        -\nabla \cdot (\D \nabla u) + \gamma u = b & \text{in } \Omega,\\
        u = 0 &\text{on } \Gamma,\\
    \end{cases}
    \label{eq:modelproblem}
\end{equation}
where $\gamma \in \leb{\infty}{\Omega}$, $\gamma(\xx) \geq 0$ for all $\xx \in \Omega$, is the reaction coefficient, and $b \in \leb{2}{\Omega}$ is the source term.
Moreover, $\D \in \vleb{\infty}{\Omega}{3 \times 3}$ represents the diffusion tensor, which is uniformly symmetric positive definite over $\Omega$, i.e. there exist two constants $0 < \alpha_1 \leq \alpha_2$ such that 
\begin{equation*}
    \alpha_1 \norm[2]{\bm{\epsilon}} \leq \bm{\epsilon}^T \D(\xx) \bm{\epsilon} \leq \alpha_2 \norm[2]{\bm{\epsilon}} \qquad \forall \xx \in \Omega,\quad \forall\bm{\epsilon} \in \R^3,
\end{equation*}
where $\norm[2]{}$ denotes the euclidean norm.

The variational formulation of the Problem \eqref{eq:modelproblem} reads as: \textit{Find $u \in \VP := \sob[0]{1}{\Omega}$ such that}
\begin{equation}
    \mathcal{B}(u, v) = \scal[\Omega]{b}{v} \quad \forall v \in \VP,
    \label{eq:varproblem}
\end{equation}
where $\bilin{}{} : \VP \times \VP \to \R$ is the following symmetric, continuous, and coercive bilinear form
\begin{equation}
\mathcal{B}(u,v) = \scal[\Omega]{\D \nabla u}{\nabla v} + \scal[\Omega]{\gamma u}{v} \quad \forall u,v \in \VP.
\label{eq:formabilincontinua}
\end{equation}

\section{The Zipped Finite Element Space}
\label{sec:zippedfem}

Let $\Th$ be a discretization of the domain $\Omega$ into non-overlapping polyhedral elements $E$. The symbols $\Nv[E]$, $\Ne[E]$, and $\Nf[E]$ denote the number of vertices, edges, and faces of the polyhedron $E$, respectively. We denote by $\Eh[E]$ and $\Fh[E]$ the set of edges and faces of $E$, respectively. Moreover, we define the global sets $\Eh \coloneq \cup_{E \in \Th} \Eh[E]$ and $\Fh \coloneq \cup_{E \in \Th} \Fh[E]$, and we set $h \coloneq \max_{E \in \Th} h_E$. Finally, let $\{\vv^{\mathcal{O}}_{ j}\}_{j = 1}^{\Nv[\mathcal{O}]}$ denotes the sets of vertices of the polytope $\mathcal{O}$.

We assume $\Th$ satisfies the following standard mesh assumptions \cite{Ahmad2013}.
\begin{assumptions}[Mesh assumptions]\label{ass:meshassumption}
There exists a positive real number $\rho \in (0, 1)$, independent of $h$, such that
\begin{itemize}
    \item for each element $E\in\Th$, each face $f$ of $E$ and each edge $e$ of $f$, it holds $h_e \geq \rho h_f \geq \rho^2 h_E$;
    \item each element $E\in\Th$ is star-shaped with respect to all the points of a sphere of radius $\geq \rho h_E$;
    \item each face $f \in \Fh$ is star-shaped with respect to all points in a disk of radius $\geq \rho h_f$.
\end{itemize}
\end{assumptions}

Let $\xx_{E}$ and $\xx_f$ denote the centers of the largest ball contained in $E$ and the largest disk contained in $f$, respectively, such that $E$ is star-shaped with respect to every point of the ball centered at $\xx_E$, while $f$ is star-shaped with respect to every point of the disk centered at $\xx_f$. The point $\xx_E$ can be found as the solution of the following linear optimization problem \cite{NevaTeora2025, Calafiore2014}: 
\begin{equation}
\begin{aligned}
    \displaystyle\max_{(\xx_{E}, \varrho_{E}) \in \R^{4}}\quad &\varrho_{E}\\
    \text{such that} \quad& \nn_f \cdot \xx_E + \varrho_E \leq \mu_f  \quad \forall f \in \Fh[E]\\
    & 0 < \varrho_E \leq h_E.
\end{aligned}
\label{eq:opt:centerstarshaped}
\end{equation}
where $\varrho_{E}$ represents the radius of the sphere centered at $\xx_E$, $\nn_f$ denotes the unit normal vector to the face $f \in \Fh[E]$ pointing outward from the polyhedron $E$, whereas the scalar coefficient $\mu_f$ is defined as $\mu_f \coloneq \nn_f \cdot \vv^f_{1}$. The point $\xx_f$, for each face $f \in \Fh[E]$, can be found by solving an analogous problem.

Let us define the sub-triangulation $\TE[f] = \{T_j^f\}_{j=1}^{\Nv[f]}$ of a face $f$ by connecting each pair of consecutive vertices of $f$ with the interior point $\xx_f$, thereby forming the triangles
\begin{equation*}
    T_j^f = \{\xx_f, \vv_j^f,\vv_{j+1}^f\} \quad  j=1,\dots, \Nv[f],
\end{equation*}
where we adopt the convection $\vv_{\Nv[f]+1}^f = \vv_1^f$. The sub-tetrahedralization $\TE[E]$ of the polyhedron $E$ is then obtained by connecting the vertices of each triangle $T_j^f$ to the interior point $\xx_E$. In particular, we define the tetrahedra as
\begin{equation*}
    T_{j,f}^E = \{\xx_f, \vv_j^f,\vv_{j+1}^f, \xx_E\} \quad  j=1,\dots, \Nv[f] \quad \forall f \in \Fh[E].
\end{equation*}

\begin{figure}[!ht]
    \centering
    \begin{subfigure}{0.4\textwidth}
        \includegraphics[width=1\linewidth]{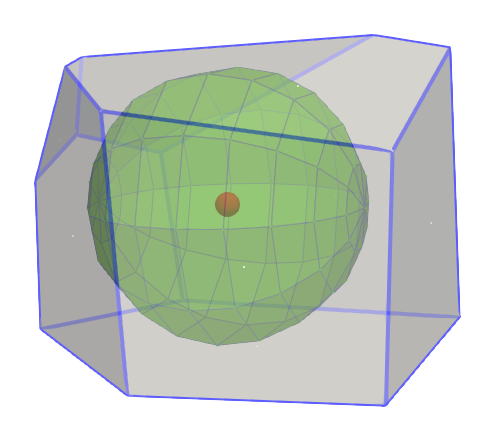}
        \caption{}
    \end{subfigure}
    \begin{subfigure}{0.4\textwidth}
        \includegraphics[width=1\linewidth]{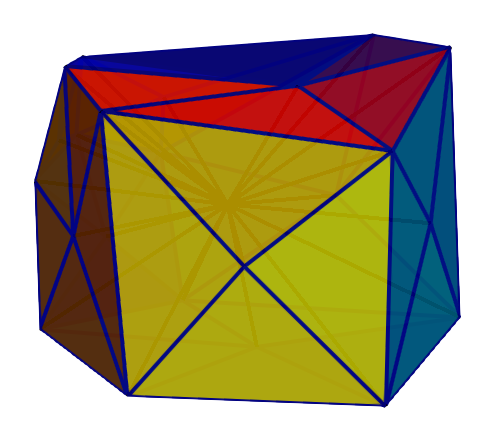}
        \caption{}
    \end{subfigure}
    \caption{A generic polyhedron. Left: In green, the insphere centered at $\xx_E$, highlighted in red. Right: Sub-tetrahedralization.}
    \label{fig:element-tetra}
\end{figure}
Figure \ref{fig:element-tetra} shows the computed insphere, its center $\xx_E$, and the derived sub-tetrahedralization for a generic polyhedron.

The standard finite element space of order $k \geq 1$ over the sub-tetrahedralization $\TE[E]$ is defined as
\begin{equation}
    \mathbb{V}_k(E; \TE[E]) := \{v \in \con{0}{\overline{E}} \cap \sob{1}{E} : v_{|T} \in \Poly{k}{T}\ \ \forall\, T \in \TE[E] \}.
    \label{eq:fintielementlocalspace}
\end{equation}
We observe that $\Poly{k}{E} \subset \mathbb{V}_k(E; \TE[E])$. Moreover, the dimension of this space is defined as $N_E^{\Psi,k} \coloneq \dim \mathbb{V}_k(E; \TE[E])$ and it is given by
\begin{equation*}
    N_E^{\Psi,k} = \Nv[E] + \Nf[E] + 1 + \left(\Ne[E] + \Nv[E] + \Nf[E] + \sum_{f \in \Fh[E]} \Nv[f]\right) (k - 1)  + \left(\Ne[E] + 2 \sum_{f \in \Fh[E]} \Nv[f] \right) n_{k-3}^2 + \left(\sum_{f \in \Fh[E]} \Nv[f] \right)n_{k-4}^3.
\end{equation*} 

The zipped finite element method consists of \emph{compressing}
the space $\mathbb{V}_k(E; \TE[E])$ down to a new space $\VPh[E]{k}$ of dimension
\begin{equation}
\Ndof[E] \coloneq \; \Nv[E] + \Ne[E] (k - 1) + \Nf[E] n_{k-3}^2 + n_{k-4}^3.
\label{eq:Ndof}
\end{equation}
This dimension reflects the choice of degrees of freedom in the Z-FEM space. Specifically, there is one degree of freedom associated with each vertex, $k-1$ degrees of freedom associated with each edge, $n_{k-3}^2$ internal degrees of freedom associated with each face, and $n_{k-4}^3$ internal degrees of freedom associated with the polyhedron. The vertex and edge degrees of freedom uniquely determine a polynomial of degree $k$ along each edge of the polyhedron. The additional $n_{k-3}^2$ face-internal degrees of freedom and $n_{k-4}^3$ polyhedron-internal degrees of freedom then ensure the exact reproduction of polynomials of degree up to $k$ on each face $f$ and in $E$, respectively. This choice of degrees of freedom is consistent with the so-called \textit{lazy choice} adopted for serendipity elements \cite{DaVeigaBrezzi2016, DaVeigaDassi2017}.

Let $\finenodes[E] = {1,\dots,N_E^{\Psi,k}}$ denote the index set of the Lagrange nodes associated with $\mathbb{V}_k(E;\TE[E])$, which we refer to as the \textit{fine nodes}. We partition this set into the following subsets:
\begin{itemize}
    \item $\mathcal{I}_{v,e}^{\mathcal{C}}$ indexing the vertex and edge nodes;
    \item $\mathcal{I}_f$ indexing the nodes internal to the face $f\in\Fh[E]$; 
    \item $\mathcal{I}_E$ indexing the nodes internal to the polyhedron $E$.
\end{itemize}

For each face $f\in\Fh[E]$, we partition the set $\mathcal{I}_f$ into two subsets. The first, denoted by $\mathcal{I}_f^{\mathcal{C}}$, contains $n_{k-3}^2$ suitably chosen indices, while the second consists of the remaining indices, denoted by $\virtualnodes[f]$, associated with the nodal coordinates $\{\pp_j^f\}_{j \in \virtualnodes[f]}$. These nodal coordinates are referred to as the face virtual nodes. The set $\coarsenodes[f]$, defined as the union of the face internal coarse nodes $\mathcal{I}_f^{\mathcal{C}}$ and the indices associated with the vertex and edge nodes belonging to $f$, is instead referred to as the set of indices of the face coarse nodes. The corresponding nodal coordinates are denoted by $\{\xx_i^f\}_{i \in \coarsenodes[f]}$. 

An analogous construction is introduced for the polyhedron $E$. In this case, we partition the set $\mathcal{I}_E$ into $\mathcal{I}_E^{\mathcal{C}}$, containing $n_{k-4}^3$ suitably chosen indices, and $\virtualnodes[E] = \mathcal{I}_E \setminus \mathcal{I}_E^{\mathcal{C}}$, with the corresponding nodal coordinates denoted by $\{\pp_j^E\}_{j\in\virtualnodes[E]}$. These are referred to as the polyhedron virtual nodes. Similarly, we denote by $\coarsenodes[E]$ the union of vertex and edge nodes, face internal coarse nodes, and the bulk coarse nodes, i.e. we define $\coarsenodes[E] \coloneq \mathcal{I}_{v,e}^{\mathcal{C}} \cup \left(\bigcup_{f \in \Fh[E]}\mathcal{I}_f^{\mathcal{C}}\right) \cup \mathcal{I}_E^{\mathcal{C}}$. The set $\coarsenodes[E]$ is called the set of coarse nodes, and its corresponding nodal coordinates are denoted by $\{\xx_i^E\}_{i \in \coarsenodes[E]}$. Moreover, without loss of generality, we always consider $\coarsenodes[E] = \{1,\dots, \Ndof[E]\}$.

We just want to underline that $\virtualnodes[E] \cap \virtualnodes[f] = \emptyset$ and $\coarsenodes[E] \cap \coarsenodes[f] = \coarsenodes[f]$, for each $f \in \Fh[E]$.

For each integer $k \geq 1$ and polyhedron $E$, we define the local zipped finite element space as
\begin{align}
    \label{eq:zfem_constraint:fem}\VPh[E]{k} \coloneq \{ v \in \con{0}{\overline{E}} \cap \sob{1}{E}: i) &\ v \in \mathbb{P}_k(T) \quad \forall  T \in \TE[E], \\
    \label{eq:zfem_constraint:face} 
    ii) &\  v(\pp_j^f)= \sum_{i \in \coarsenodes[f]} \omega_{ij}^f v(\xx_i^f) \quad \forall j \in \virtualnodes[f] \quad \forall f \in \Fh[E],\\
    \label{eq:zfem_constraint:internal}  iii) &\  v(\pp_j^E)= \sum_{i \in \coarsenodes[E]} \omega_{ij}^E v(\xx_i^E) \quad \forall j \in \virtualnodes[E] \},
\end{align}
for a given sets of non-zeros virtual weights $\{\{\omega_{ij}^f\}_{i \in \coarsenodes[f], j \in \virtualnodes[f]}\}_{f \in \Eh[f]}$ and $\{\omega_{ij}^E\}_{i \in \coarsenodes[E], j \in \virtualnodes[E]}$. The computation of these weights and the selection of coarse nodes will be detailed in Sections \ref{sec:weights} and \ref{sec:selection_nodes}, respectively.

On $\VPh[E]{k}$, we choose as degrees of freedom for a function $v \in \VPh[E]{k}$ its values at the polyhedron coarse nodes $\{\xx_i\}_{i \in \coarsenodes[E]}$, namely:
\begin{enumerate}[label={\textbf{D.\arabic*}}]
    \item \label{dof:vertices} the values of $v$ at the vertices of $E$;
    \item \label{dof:edge} if $k\geq2$, the values of $v$ in $k-1$ evenly spaced points internal of the edges of $e \in \Eh[E]$; 
    \item \label{dof:face} if $k\geq3$, for each $f \in \Fh[E]$, the values of $v$ at the $n_{k-3}^2$ face coarse nodes related to the indices $\mathcal{I}_f^{\mathcal{C}}$;
    \item \label{dof:internal} if $k\geq4$, the values of $v$ at the $n_{k-4}^3$ coarse nodes related to the indices $\mathcal{I}_E^{\mathcal{C}}$.
\end{enumerate}

The next proposition proves that this set of degrees of freedom uniquely identifies a function $v \in \VPh[E]{k}$.
\begin{proposition}
    The degrees of freedom \ref{dof:vertices}--\ref{dof:internal} are unisolvent for $\VPh[E]{k}$.
\end{proposition}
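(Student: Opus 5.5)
The plan is to argue through the Lagrange nodal basis of $\mathbb{V}_k(E;\TE[E])$. By \eqref{eq:zfem_constraint:fem}, every $v \in \VPh[E]{k}$ belongs to $\mathbb{V}_k(E;\TE[E])$. This space is unisolvent with respect to point values at the fine nodes $\finenodes[E]$, since standard Lagrange finite elements on the conforming sub-tetrahedralization $\TE[E]$ have this property. So $v$ is uniquely determined by the vector $(v(\text{node}_\ell))_{\ell \in \finenodes[E]}$. The fine index set is the disjoint union
\[
\finenodes[E] = \coarsenodes[E] \cup \Bigl(\bigcup_{f\in\Fh[E]} \virtualnodes[f]\Bigr) \cup \virtualnodes[E].
\]
It therefore suffices to show two things. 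First, the values at the virtual nodes are determined by the values at the coarse nodes, i.e.\ by \ref{dof:vertices}--\ref{dof:internal}. Second, every choice of coarse values is attained by some element of $\VPh[E]{k}$. Together these give a bijection between $\VPh[E]{k}$ and $\R^{\Ndof[E]}$.

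Both points follow from the explicit form of constraints \eqref{eq:zfem_constraint:face}--\eqref{eq:zfem_constraint:internal}. For each face $f$ and $j\in\virtualnodes[f]$, the right-hand side of \eqref{eq:zfem_constraint:face} involves only values at nodes in $\coarsenodes[f]\subset\coarsenodes[E]$. Likewise, for $j\in\virtualnodes[E]$, the right-hand side of \eqref{eq:zfem_constraint:internal} involves only values at $\coarsenodes[E]$. No virtual value appears on any right-hand side, so the constraints are not coupled and need not be solved as a system. Moreover, the virtual index sets are pairwise disjoint. This holds because $\virtualnodes[E]\cap\virtualnodes[f]=\emptyset$, and distinct faces have disjoint interior node sets $\mathcal{I}_f$. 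Hence each fine virtual value is prescribed by exactly one constraint, with no risk of two incompatible prescriptions.

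For \emph{injectivity}, take $v\in\VPh[E]{k}$ whose degrees of freedom all vanish. Then all coarse nodal values are zero. Substituting into ii) and iii) gives zero at every virtual node, so every fine nodal value vanishes and $v\equiv0$ by unisolvence of $\mathbb{V}_k(E;\TE[E])$. For \emph{surjectivity}, fix arbitrary coarse values $c\in\R^{\Ndof[E]}$. Define the virtual values through the formulas ii) and iii), and let $v$ be the element of $\mathbb{V}_k(E;\TE[E])$ with these fine nodal values. Then $v$ satisfies i) by construction and ii)--iii) by definition, so $v\in\VPh[E]{k}$ with the prescribed degrees of freedom. As a consistency check, the number of coarse nodes equals $\Ndof[E]$ in \eqref{eq:Ndof}. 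This requires the count to hold on every face and in the bulk: $|\mathcal{I}_f^{\mathcal{C}}|=n^2_{k-3}$ and $|\mathcal{I}_E^{\mathcal{C}}|=n^3_{k-4}$, which holds by the definition of the partitions. The vertex and edge nodes of $E$ then contribute $\Nv[E]+\Ne[E](k-1)$.

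The main obstacle is the bookkeeping needed to ensure the map from coarse values to fine values is well defined. Concretely, one must verify that no fine node is counted twice or left out: the coarse and virtual sets must partition $\finenodes[E]$, and the face coarse nodes $\coarsenodes[f]$ must be genuinely contained in $\coarsenodes[E]$. This requires using that each face virtual node lies in the interior of exactly one face. It also requires that the vertex and edge nodes of $f$ belong to $\mathcal{I}^{\mathcal{C}}_{v,e}$. Once this partition is established, the argument is purely linear-algebraic. It does not depend on how the weights $\omega$ are chosen, and it does not need the nonzero assumption on the weights.
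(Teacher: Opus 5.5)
Your argument is correct, and its injectivity half is exactly the paper's proof: vanishing coarse values force the virtual values to vanish through constraints ii)--iii), and unisolvence of the fine Lagrange nodes for $\mathbb{V}_k(E;\TE[E])$ then gives $v\equiv 0$. The paper only asserts that $\dim \VPh[E]{k} = \Ndof[E]$, whereas your surjectivity construction actually proves it, by assigning the virtual values through the constraint formulas (whose right-hand sides involve only coarse values), so your version is slightly more complete.
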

\begin{proof}
We observe that the number of degrees of freedom $\Ndof[E]$, defined in \eqref{eq:Ndof}, is equal to the dimension of the space $\VPh[E]{k}$. To prove the unisolvence, we need to show that
\begin{equation*}
    v(\xx_i^E) = 0 \quad \forall i\in \coarsenodes[E] \quad\Rightarrow \quad v \equiv 0 \quad \text{in } E \qquad \forall v \in \VPh[E]{k}.
\end{equation*}

The constraints \eqref{eq:zfem_constraint:face} and \eqref{eq:zfem_constraint:internal} implies that
\begin{equation*}
    v(\pp_j^f) = 0 \quad \forall j \in \virtualnodes[f] \quad \forall f \in \Fh[E],
\end{equation*}
and
\begin{equation*}
    v(\pp_j^E) = 0 \quad \forall j \in \virtualnodes[E].
\end{equation*}
Since $\finenodes[f] = \coarsenodes[E] \cup \left(\bigcup_{f \in \Fh[E]} \virtualnodes[f]\right) \cup \virtualnodes[E]$ and the corresponding set of fine nodes is unisolvent for the FEM space $\mathbb{V}(E; \TE[E])$, defined in \eqref{eq:fintielementlocalspace}, this implies that $v \equiv 0$ in $E$.

\end{proof}

\section{High-order shape functions on polyhedrons}\label{sec:shapefunctions}

The goal is to define the set of Z-FEM Lagrange basis functions
$\{\varphi_i\}_{i=1}^{\Ndof[E]}$ for the space $\VPh[E]{k}$, defined in \eqref{eq:zfem_constraint:fem}--\eqref{eq:zfem_constraint:internal}, that satisfy the following properties:
\begin{enumerate}[label=\textbf{P.\arabic*}]
    \item \label{prop:krnocker}The elemental shape functions satisfy the Kronecker-Delta property with respect to the Z-FEM degrees of freedom \ref{dof:vertices}--\ref{dof:internal}, i.e.
    \begin{equation}
        \varphi_n(\xx_i^E) = \delta_{ni} \quad \forall n,i \in \coarsenodes[E].
        \label{eq:lineardofsoperator}
    \end{equation}
    \item\label{prop:continuity} The basis functions are continuous across adjacent elements ($\con{0}{}$-conformity).
    \item \label{prop:polynomial}The set $\VPh[E]{k}$ contains the set of polynomials $\Poly{k}{E}$. 
\end{enumerate}

Let us introduce the set of FEM Lagrange basis functions $\{\Psi_n\}_{n=1}^{N^E_{\Psi,k}}$ for the FEM space $\mathbb{V}_k(E; \TE[E])$ that satisfy the Kronecker-Delta property with respect to the set of fine nodes indexed by $\finenodes[E]$, i.e.
\begin{equation}
    \Psi_m(\bm{\xi}_n^E) = \delta_{nm} \quad \forall n,m \in \finenodes[E],
    \label{eq:fem:krnockerdelta}
\end{equation}
where $\{\bm{\xi}_n^E\}_{n \in \finenodes[E]} \coloneq \{\xx_i^E\}_{i \in \coarsenodes[E]} \cup \left(\bigcup_{f \in \Fh[E]} \{\pp_j^f\}_{\virtualnodes[f]}\right) \cup \{\pp_j^E\}_{\virtualnodes[E]}$ denote the fine nodes.

We can write the Z-FEM shape functions for the polygon $E$ as a weighted combination of finite element basis functions $\{\Psi_n\}_{n=1}^{N^E_{\Psi,k}}$. More precisely, for all point $\forall \xx \in \overline{E}$, we write 
\begin{equation}
    \begin{aligned}
        \varphi_{i}(\xx) &= \Psi_{i}(\xx) + \sum_{f \in \Fh[i]} \sum_{j \in \virtualnodes[f]} \omega_{ij}^f \Psi_{j}(\xx) + \sum_{j \in \virtualnodes[E]} \omega_{ij}^E \Psi_{j}(\xx)\quad &&\forall i \in \mathcal{I}_{v,e}^{\mathcal{C}},\\
        \varphi_{i}(\xx) &= \Psi_{i}(\xx) + \sum_{j \in \virtualnodes[f]} \omega_{ij}^f \Psi_{j}(\xx) + \sum_{j \in \virtualnodes[E]} \omega_{ij}^E \Psi_{j}(\xx)\quad &&\forall i \in \mathcal{I}_{f}^{\mathcal{C}} \quad \forall f \in \Fh[E],\\
        \varphi_{i}(\xx) &= \Psi_{i}(\xx) + \sum_{j \in \virtualnodes[E]} \omega_{ij}^E \Psi_{j}(\xx)\quad &&\forall i \in \mathcal{I}_{E}^{\mathcal{C}},
    \end{aligned}
    \label{eq:basisfunctions}
\end{equation}
where $\Fh[i]$, for all $i \in \mathcal{I}_{v,e}^{\mathcal{C}}$, is the subset of faces belonging to $\Eh[E]$ and adjacent to the node $\xx_i^E$, whereas the virtual weights $\{\{\omega_{ij}^f\}_{i \in \coarsenodes[f], j \in \virtualnodes[f]}\}_{f \in \Eh[f]}$ and $\{\omega_{ij}^E\}_{i \in \coarsenodes[E], j \in \virtualnodes[E]}$ are defined in \eqref{eq:zfem_constraint:face} and \eqref{eq:zfem_constraint:internal}, respectively.

It is immediate to check the following proposition.
\begin{proposition}
    The Z-FEM basis functions defined in \eqref{eq:basisfunctions} satisfy Property \ref{prop:krnocker}, independently of the choice of face and polyhedron coarse nodes and the definition of the weights $\{\{\omega_{ij}^f\}_{i \in \coarsenodes[f], j \in \virtualnodes[f]}\}_{f \in \Eh[f]}$ and $\{\omega_{ij}^E\}_{i \in \coarsenodes[E], j \in \virtualnodes[E]}$.
\end{proposition}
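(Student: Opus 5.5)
The plan is to evaluate each representation in \eqref{eq:basisfunctions} at an arbitrary coarse node $\xx_n^E$, $n \in \coarsenodes[E]$, and then apply the Kronecker-Delta property \eqref{eq:fem:krnockerdelta} of the underlying FEM basis $\{\Psi_m\}$. The key observation is structural. The fine nodes $\{\bm{\xi}_m^E\}_{m\in\finenodes[E]}$ are, by definition, the disjoint union of the coarse nodes, the face virtual nodes $\{\pp_j^f\}_{j\in\virtualnodes[f]}$ for all $f\in\Fh[E]$, and the polyhedron virtual nodes $\{\pp_j^E\}_{j\in\virtualnodes[E]}$. In particular, every coarse node $\xx_n^E$ coincides with a fine node $\bm{\xi}_n^E$ whose index $n$ is not in any $\virtualnodes[f]$ or in $\virtualnodes[E]$. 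The paper has already recorded $\virtualnodes[E]\cap\virtualnodes[f]=\emptyset$; I will also use $\coarsenodes[E]\cap\virtualnodes[f]=\emptyset$ and $\coarsenodes[E]\cap\virtualnodes[E]=\emptyset$, which follow from the partitions $\mathcal{I}_f=\mathcal{I}_f^{\mathcal{C}}\cup\virtualnodes[f]$ and $\mathcal{I}_E=\mathcal{I}_E^{\mathcal{C}}\cup\virtualnodes[E]$ together with the partition of $\finenodes[E]$ into vertex/edge, face-internal and bulk indices.

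The steps, in order, are as follows. First, fix $i\in\coarsenodes[E]$ and $n\in\coarsenodes[E]$. The first term of each line of \eqref{eq:basisfunctions} gives $\Psi_i(\xx_n^E)=\Psi_i(\bm{\xi}_n^E)=\delta_{in}$ by \eqref{eq:fem:krnockerdelta}. Second, every remaining term has the form $\omega_{ij}^f\Psi_j(\xx_n^E)$ with $j\in\virtualnodes[f]$, or $\omega_{ij}^E\Psi_j(\xx_n^E)$ with $j\in\virtualnodes[E]$. In both cases $j\neq n$ because $n$ is a coarse index and $j$ is a virtual index, so $\Psi_j(\xx_n^E)=\delta_{jn}=0$. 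Each such term therefore vanishes regardless of the value of the weight. Third, I will treat the three cases $i\in\mathcal{I}_{v,e}^{\mathcal{C}}$, $i\in\mathcal{I}_f^{\mathcal{C}}$ and $i\in\mathcal{I}_E^{\mathcal{C}}$ uniformly, since they differ only in which virtual sums appear and the second step kills all of them. Summing up gives $\varphi_i(\xx_n^E)=\delta_{in}$, which is \eqref{eq:lineardofsoperator}.

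Finally, I will note explicitly that nothing in this argument used the particular selection of $\mathcal{I}_f^{\mathcal{C}}$ and $\mathcal{I}_E^{\mathcal{C}}$ or the values of the weights $\omega_{ij}^f,\omega_{ij}^E$. This gives the claimed independence. There is no genuine obstacle here. The only point requiring care is the bookkeeping in the second step, namely that the coarse and virtual index sets are disjoint so the relevant Kronecker deltas are zero. This disjointness is guaranteed by construction of the partitions rather than by any property of the weights.
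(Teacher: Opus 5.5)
Your proposal is correct, and it is exactly the direct verification the paper has in mind when it calls this result immediate (the paper gives no written proof). At a coarse node $\xx_n^E$, the leading term $\Psi_i$ gives $\delta_{in}$ by the FEM Kronecker property, and every weighted term vanishes because coarse and virtual indices are disjoint. Hence neither the weights nor the choice of coarse nodes enters the argument.
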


\subsection{The computation of virtual weights}\label{sec:weights}

The computation of the virtual weights aims to gain Property \ref{prop:polynomial}. A polynomial $p \in \Poly[3]{k}{E}$ belongs to $\VPh[E]{k}$ if and only if
\begin{equation*}
    p(\xx) = \sum_{i=1}^{\Ndof[E]} p(\xx_i^E) \varphi_i(\xx) \quad \forall \xx \in \overline{E}.
\end{equation*}

Therefore, to ensure that $p\in\VPh[E]{k}$ for any $p\in\Poly[3]{k}{E}$, we can choose the virtual weights such that the following equations are satisfied 
\begin{align}
    \label{eq:constraint_elementwise:face} m^2_{\alpha} (\pp_{n}^f) &= \sum_{i \in \coarsenodes[f]} m^2_{\alpha}(\xx_i^f) \varphi_i(\pp_n^f) = \sum_{i=1}^{\Ndof[E]} m^2_{\alpha}(\xx_i^f) \omega_{in}^f  \quad \forall \alpha =1,\dots,n_k^2 \quad \forall n \in \virtualnodes[f]\quad \forall f \in \Fh[E] ,\\
    \label{eq:constraint_elementwise:bulk} m^3_{\alpha} (\pp_{n}^E) &= \sum_{i \in \coarsenodes[E]} m^3_{\alpha}(\xx_i^E) \varphi_i(\pp_n^E) = \sum_{i=1}^{\Ndof[E]} m^3_{\alpha}(\xx_i^E) \omega_{in}^E \quad \forall \alpha =1,\dots,n_k^3 \quad \forall n \in \virtualnodes[E].
\end{align}

Let us introduce the following matrices
\begin{align*}
    &\mathbf{D}^f \in \R^{n_k^2 \times \#\coarsenodes[f]}: &&\hspace{-40pt} \mathbf{D}^f = \{\{m^2_{\alpha}(\xx_i^f)\}_{\alpha = 1}^{n^2_k}\}_{i \in \coarsenodes[f]}  &&\hspace{-40pt}\forall f \in \Eh[E],\\
    &\mathbf{D}^E \in \R^{n_k^3 \times \Ndof[E]}: &&\hspace{-40pt} \mathbf{D}^E = \{\{m^3_{\alpha}(\xx_i^E)\}_{\alpha = 1}^{n^3_k}\}_{i = 1}^{\Ndof[E]}  &&\hspace{-40pt}\\
    &\mathbf{V}^f \in \R^{n_k^2 \times \#\virtualnodes[f]}: &&\hspace{-40pt} \mathbf{V}^f = \{\{m^2_{\alpha}(\pp_j^f)\}_{\alpha = 1}^{n^2_k}\}_{j \in \virtualnodes[f]}  &&\hspace{-40pt}\forall f \in \Eh[E],\\
    &\mathbf{V}^E \in \R^{n_k^3 \times \# \virtualnodes[E]}: &&\hspace{-40pt} \mathbf{V}^E = \{\{m^3_{\alpha}(\pp_j^E)\}_{\alpha = 1}^{n^3_k}\}_{j \in \virtualnodes[E]}  &&\hspace{-40pt}\\
    &\mathbf{W}^f \in \R^{\#\coarsenodes[f] \times \#\virtualnodes[f]}: &&\hspace{-40pt} \mathbf{W}^f = \{\{\omega^f_{ij}\}_{i \in \coarsenodes[f]}\}_{j \in \virtualnodes[f]}  &&\hspace{-40pt}\forall f \in \Eh[E],\\
    &\mathbf{W}^E \in \R^{\Ndof[E] \times \# \virtualnodes[E]}: &&\hspace{-40pt} \mathbf{W}^E = \{\{\omega^E_{ij}\}_{i =1}^{\Ndof[E]}\}_{j \in \virtualnodes[E]}  &&\hspace{-40pt}
\end{align*}
the equations \eqref{eq:constraint_elementwise:face} and \eqref{eq:constraint_elementwise:bulk} can be equivalently written in matrix form as
\begin{equation}
    \begin{aligned}
        &\mathbf{D}^f \mathbf{W}^f = \mathbf{V}^f \quad \forall f \in \Fh[E],\\
        &\mathbf{D}^E \mathbf{W}^E = \mathbf{V}^E.
    \end{aligned}
    \label{eq:single_system}
\end{equation}

As remarked in \cite{NevaTeora2025}, if a face $f$ is a triangle, then $n_k^2 = \#\coarsenodes[f]$ and the matrix $\mathbf{D}^f$ is a square matrix. Analogously, if $E$ is a tetrahedron, then $n_k^3 = \Ndof[E]$ and the matrix $\mathbf{D}^E$ is squared. In all the other cases, it holds
\begin{equation*}
    \#\coarsenodes[f] > n_k^2 \quad \text{and} \quad \Ndof[E] > n_k^3,
\end{equation*}
leading to undetermined systems of the form \eqref{eq:single_system}.
To overcome this issue, we proceed as in \cite{Bunge2020, NevaTeora2025} and we determine the weights by solving the following minimization problems
\begin{equation}
    \begin{aligned}
    \min_{\mathbf{W}^f \in \R^{\# \coarsenodes[f] \times \# \virtualnodes[f]}}
        &\sum_{i \in \coarsenodes[f]} \sum_{j \in \virtualnodes[f]} (\omega_{ij}^f)^2\\
    \text{such that}\quad 
   &\mathbf{D}^f \mathbf{W}^f = \mathbf{V}^f,
\end{aligned}
\label{eq:optproblem:face}
\end{equation}
for each $f \in\Fh[E]$, and
\begin{equation}
    \begin{aligned}
    \min_{\mathbf{W}^E \in \R^{\Ndof[E] \times \# \coarsenodes[E]}}
        &\sum_{i \in \coarsenodes[E]} \sum_{j \in \virtualnodes[E]} (\omega_{ij}^E)^2\\
    \text{such that}\quad 
    &\mathbf{D}^E \mathbf{W}^E = \mathbf{V}^E.
\end{aligned}
\label{eq:optproblem:bulk}
\end{equation}
These optimization problems can be solved efficiently and with ease by using Algorithm \ref{alg:kkt}, which requires solving a set of $\# \virtualnodes_{\ast}$ linear systems, all characterized by the same coefficient matrix $\mathbf{\mathbf{D}^{\ast} \left(\mathbf{D}^{\ast}\right)^T}$, for $\ast \in \{f,E\}$.
In \cite{NevaTeora2025}, the following result has been proved.
\begin{proposition}\label{prop:optproblemsolution}
    If the matrices $\mathbf{D}^f$, for all $f \in \Fh[E]$, and the matrix $\mathbf{D}^E$ have full row rank, then problems \eqref{eq:optproblem:face} and \eqref{eq:optproblem:bulk} admit a unique solution, and they can be solved efficiently by using Algorithm \ref{alg:kkt}. Moreover, Property \ref{prop:polynomial} holds, i.e. $\Poly{k}{E} \subseteq \VPh[E]{k}$.
\end{proposition}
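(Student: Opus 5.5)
The proof splits into two parts: the unique solvability of the minimum-norm problems, and the polynomial reproduction.

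\emph{Existence and uniqueness.} Since the objective and the constraints in \eqref{eq:optproblem:face} and \eqref{eq:optproblem:bulk} decouple column by column, each problem is equivalent to a family of independent minimum-norm problems. For $\ast\in\{f,E\}$ and each $j\in\virtualnodes[\ast]$ we minimise $\|\mathbf{w}_j\|_2^2$ subject to $\mathbf{D}^\ast\mathbf{w}_j=\mathbf{v}_j$, where $\mathbf{w}_j$ and $\mathbf{v}_j$ are the $j$-th columns of $\mathbf{W}^\ast$ and $\mathbf{V}^\ast$. The plan is as follows.
\begin{itemize}
\item Full row rank of $\mathbf{D}^\ast$ makes the constraint set a nonempty affine subspace, since $\mathbf{D}^\ast$ is surjective.
\item The objective is strictly convex, so there is a unique minimiser. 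Equivalently, it is the orthogonal projection of $0$ onto that affine subspace.
\item The KKT conditions give $\mathbf{w}_j=(\mathbf{D}^\ast)^T\bm{\lambda}_j$ with $\mathbf{D}^\ast(\mathbf{D}^\ast)^T\bm{\lambda}_j=\mathbf{v}_j$.
\item Full row rank makes $\mathbf{D}^\ast(\mathbf{D}^\ast)^T$ symmetric positive definite. It can therefore be Cholesky-factorised once and reused for all $\#\virtualnodes[\ast]$ right-hand sides, which is exactly Algorithm \ref{alg:kkt}.
\item Optimality (sufficiency) follows by writing any feasible $\mathbf{w}=\mathbf{w}_j+\mathbf{z}$ with $\mathbf{z}\in\ker\mathbf{D}^\ast$. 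Since $\mathbf{w}_j\in\mathrm{range}((\mathbf{D}^\ast)^T)\perp\ker\mathbf{D}^\ast$, we get $\|\mathbf{w}\|^2=\|\mathbf{w}_j\|^2+\|\mathbf{z}\|^2$.
\end{itemize}

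\emph{Polynomial reproduction.} Fix $p\in\Poly{k}{E}$ and define $\Pi p:=\sum_{i\in\coarsenodes[E]}p(\xx_i^E)\varphi_i$. By \eqref{eq:basisfunctions}, $\Pi p$ is the function of $\mathbb{V}_k(E;\TE[E])$ with the following nodal values:
\begin{itemize}
\item $p(\xx_i^E)$ at the coarse nodes;
\item $\sum_{i\in\coarsenodes[f]}\omega^f_{ij}p(\xx_i^f)$ at each face virtual node $\pp_j^f$;
\item $\sum_{i\in\coarsenodes[E]}\omega^E_{ij}p(\xx_i^E)$ at each bulk virtual node $\pp_j^E$.
\end{itemize}
For the face virtual nodes we need the face weights to depend only on nodes of $f$. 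This holds because $\Psi_j$ with $j\in\virtualnodes[f]$ appears only in $\varphi_i$ with $i\in\coarsenodes[f]$, using the disjointness $\virtualnodes[E]\cap\virtualnodes[f]=\emptyset$.

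Next I use that the restriction of $p$ to the planar face $f$ is a bivariate polynomial of degree $\le k$ in local face coordinates. It is therefore a linear combination $p|_f=\sum_\alpha c_\alpha m^2_\alpha$ of scaled face monomials. The face constraint $\mathbf{D}^f\mathbf{W}^f=\mathbf{V}^f$ then gives $\sum_i\omega^f_{ij}p(\xx_i^f)=p(\pp_j^f)$. Likewise, expanding $p=\sum_\alpha c_\alpha m^3_\alpha$ and using $\mathbf{D}^E\mathbf{W}^E=\mathbf{V}^E$ gives $\sum_i\omega^E_{ij}p(\xx_i^E)=p(\pp_j^E)$. Hence $\Pi p$ and $p$ agree at all fine nodes. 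Since $p\in\mathbb{V}_k(E;\TE[E])$ and the fine nodes are unisolvent for that space, $\Pi p=p$. Finally, $p$ satisfies constraints ii) and iii) by the identities just shown, so $p\in\VPh[E]{k}$.

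\emph{Main obstacle.} The delicate point is the face step: the column indexing of $\mathbf{D}^E$ versus $\coarsenodes[f]$, and the restriction of a trivariate polynomial to a face. I would handle the latter by fixing an affine parametrisation of the plane of $f$. The restricted polynomial then has degree $\le k$ in the two face coordinates, so it lies in the span of the face monomials used to build $\mathbf{D}^f$ and $\mathbf{V}^f$.
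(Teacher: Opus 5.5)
Your proposal is correct and follows essentially the same route as the argument the paper relies on; the paper gives no proof of its own and defers to \cite{NevaTeora2025}. That route solves column-wise minimum-norm problems through the normal equations with the symmetric positive definite matrix $\mathbf{D}\mathbf{D}^T$, which is exactly Algorithm \ref{alg:kkt}, and then obtains polynomial reproduction from exactness of the constraints on the monomial basis together with unisolvence of the fine nodes. Your explicit check of which virtual weights enter which $\varphi_i$, and the affine parametrisation of the planar faces, supply 3D details the paper leaves implicit.
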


\SetKwComment{Comment}{/* }{ */}
\begin{algorithm}
\caption{An algorithm to efficiently solve problems \eqref{eq:optproblem:face} and \eqref{eq:optproblem:bulk}. }\label{alg:kkt}
\KwData{The matrices $\mathbf{D}$ and $\mathbf{V}$.}
\KwResult{The shape function weights $\mathbf{W}$.}
$\mathbf{L} \gets \operatorname{chol}(\mathbf{D}\mathbf{D}^T)$ \Comment*[r]{Compute Cholesky factorization of $\mathbf{D}\mathbf{D}^T = \mathbf{L} \mathbf{L}^T$ once.}
\For{$j = 1,\dots, \# \mathcal{K}$}{ 
    $\mathbf{L} \bm{y} = \mathbf{V}(:, j) \rightarrow \bm{y}$\;
    $\mathbf{L}^T \bm{x} = \bm{y} \rightarrow \bm{x}$\Comment*[r]{Solve two triangular systems of dimension $n_k$ at each iteration.}
    $\mathbf{W}(:, j) \gets \mathbf{D}^T \bm{x}$\;
}
\end{algorithm}

\subsection{Selection of the coarse nodes}\label{sec:selection_nodes}

We first observe that the number and location of the vertex and edge degrees of freedom described in \ref{dof:vertices} and \ref{dof:edge} are fixed a priori by the $\con{0}{}$-conformity requirement (Property \ref{prop:continuity}), together with the requirement that, for each edge $e \in \Eh$, the associated set of nodes be $k$-unisolvent, i.e., uniquely determine the polynomials in $\Poly[1]{k}{e}$ (Property \ref{prop:polynomial}).

The selection of the $n_{k-3}^2$ face coarse nodes from the set of face fine nodes should follow the same principles: it should depend only on the geometry of the face, and not on the polyhedron, to ensure the $\con{0}{}$-conformity requirement. Moreover, the selected set of face coarse nodes, indexed by $\mathcal{I}_{f}^{\mathcal{C}}$, must ensure that the set of face coarse nodes is $k$-unisolvent for the space of two-dimensional polynomials $\Poly[2]{k}{f}$ for every $f \in \Fh$. This condition also ensures that the matrices $\mathbf{D}^f$ have full row rank and, consequently, the applicability of Algorithm \ref{alg:kkt} to determine the face virtual weights. 

Finally, the selection of the $n_{k-4}^3$ coarse nodes, indexed by $\mathcal{I}_E^{\mathcal{C}}$, must be finalized to ensure the set of polyhedron coarse nodes is $k$-unisolvent for the space of three-dimensional polynomials $\Poly[3]{k}{E}$ for every $E \in \Th$. Also for this set, $k$-unisolvence ensures that the matrices $\mathbf{D}^E$ have full row rank and, consequently, the applicability of Algorithm \ref{alg:kkt} to determine the polyhedron virtual weights. 

Since $\Poly[3]{k}{E} \subset \mathbb{V}_k(E; \TE[E])$, there always exists a suitable choice of the subsets $\mathcal{I}_f^{\mathcal{C}}$ and $\mathcal{I}_E^{\mathcal{C}}$ that guarantees $k$-unisolvence. However, for intricate geometries, such as concave elements or elements with hanging nodes, identifying such internal points can be geometrically challenging and computationally demanding. To address this issue, the authors in \cite{NevaTeora2025} propose a heuristic selection strategy for the two-dimensional case, based on the following two principles:
\begin{itemize}
\item minimizing the alignment of the internal points;
\item ensuring a homogeneous spatial distribution of the internal points within the polygon.
\end{itemize}
The proposed strategy consistently produces a full-rank matrix $\mathbf{D}^f$ for all the tested cases. Since it relies solely on DOF index values, it is computationally inexpensive and can be readily extended to the three-dimensional setting.

An alternative strategy to the heuristic strategy to choose coarse nodes is presented next, and it will be called the QR-strategy. This QR-strategy is more computationally demanding than the heuristic approach presented in \cite{NevaTeora2025}, but relies on a more systematic procedure and is therefore more robust and less prone to failure. Like the heuristic approach, it avoids the geometric complexity typically associated with classical serendipity strategies, as it does not require the geometric identification of aligned entities or the definition of problem-dependent geometric tolerances. Such geometric criteria become increasingly difficult to define and apply as the geometric dimension increases, particularly in three-dimensional settings. In contrast, the proposed QR-strategy does not depend on the geometric dimension and can therefore be applied in any dimension.

We present this new strategy for the three-dimensional case, but it can be applied analogously to the 2D setting.

As for the heuristic strategy, this QR-strategy must be applied only for $k \geq 4$ in 3D to properly choose among the internal nodes, indexed by $\mathcal{I}_E$, the $n_{k-4}^3$ internal coarse nodes. Indeed, boundary degrees of freedom are dictated by conformity requirements.
Thus, let us define the matrix $\mathbf{D}^{B} \in \R^{n_k^3 \times \left(\# \mathcal{I}^{\mathcal{C}}_{v,e} + \sum_{f \in \Fh[E]} \# \mathcal{I}^{\mathcal{C}}_f\right)}$ as the matrix collecting the values of three-dimensional monomials at the boundary degrees of freedom that must be inserted among coarse nodes to guarantee $\con{0}{}$-conformity. Moreover, let $\mathbf{A}^{E} \in \R^{n^3_k \times \# \mathcal{I}_E}$ be the matrix collecting the evaluations of three-dimensional monomials at all internal nodes, indexed by $\mathcal{I}_E$.

The QR strategy proceeds as follows:
\begin{enumerate}
\item First, compute the QR factorization with column pivoting \cite{Engler1997} of the matrix $\mathbf{D}^{B}$ in order to identify a basis for its column space. Since the QR factorization with column pivoting is a rank-revealing factorization, this can be done as follows:
\begin{equation}
[\mathbf{Q}^{B}, \mathbf{R}^{B}, \mathbf{pivot}^{B}, \rm{rank}^{B}] = qr(\mathbf{D}^{B}),
\end{equation}
where $\mathbf{Q}^{B} \in \R^{n^3_k \times n^3_k}$ and $\mathbf{R}^{B} \in \R^{n^3_k \times \left(\# \mathcal{I}^{\mathcal{C}}{v,e} + \sum{f \in \Fh[E]} \# \mathcal{I}^{\mathcal{C}}f\right)}$ are the matrices resulting from the factorization of $\mathbf{D}^{B}$, $\mathbf{pivot}^{B} \in \R^{\# \mathcal{I}^{\mathcal{C}}{v,e} + \sum_{f \in \Fh[E]} \# \mathcal{I}^{\mathcal{C}}_f}$ is the vector containing the column permutation induced by the pivoting strategy, and $\rm{rank}^{B}$ denotes the numerical rank of $\mathbf{D}^{B}$ detected by the rank-revealing QR factorization.

The first $\rm{rank}^{B}$ columns of $\mathbf{Q}^{B}$, i.e., $\mathbf{Q}^{B}(:,1:\rm{rank}^{B})$, form an orthonormal basis for the column space of $\mathbf{D}^{B}$.

    \item Next, remove from the columns of $\mathbf{A}^{E}$ their components in the directions spanned by $\mathbf{Q}^{B}(:,1:\rm{rank}^{B})$. Specifically, define
    \begin{equation}
        \overline{\mathbf{A}}^{E} \coloneq \mathbf{A}^{E} - \mathbf{Q}^{B}(:, 1:\rm{rank}^{B}) \left(\mathbf{Q}^{B}(:, 1:\rm{rank}^{B}) \right)^T \mathbf{A}^{E}.
    \end{equation}
    \item Finally, identify the columns of $\mathbf{A}^{E}$ that are most linearly independent of both each other and the columns of $\mathbf{D}^{B}$ by applying a QR factorization with column pivoting to $\overline{\mathbf{A}}^{E}$: 
    \begin{equation}
        [\mathbf{Q}^{E}, \mathbf{R}^{E}, \mathbf{pivot}^{E}, \rm{rank}^{E}] = qr(\overline{\mathbf{A}}^{E}).
    \end{equation}
    We then define the set of internal bulk coarse nodes, $\mathcal{I}E^{\mathcal{C}}$, by selecting the first $n{k-4}^3$ column indices returned by the pivoting strategy, i.e., $\mathbf{pivot}^{E}(1:n_{k-4}^3)$. The virtual nodes $\virtualnodes[E]$ are identified by the remaining column indices returned by the QR factorization, namely, $\mathbf{pivot}^{E}(n_{k-4}^3:)$.
\end{enumerate}

We remark that the same strategy can also be used to pursue the stingy choice typical of the serendipity elements, i.e. not always adding $n_{k-4}^3$, but only the proper number $N^S$ such that $N^S + \rm{rank}^{B} = n_k^3$. Nonetheless, for the sake of simplicity, in this preliminary paper, we prefer to always fix the number of internal degrees of freedom, choosing to follow the lazy choice.

\begin{remark}
    We observe that, for high orders of the methods, the matrix $\mathbf{D}$ can become ill-conditioned. Nonetheless, different techniques are available in the literature to improve the condition number of matrix $\mathbf{D}$ \cite{Mascotto2018, Teora2024}.
\end{remark}

\section{The Zipped Finite Element Discrete Problem}\label{sec:discreteproblem}

For any integer $k \geq 1$, the zipped finite element space related to the tessellation $\Th$ can be written as
\begin{equation*}
    \VPh{k} \coloneq \{ v \in \con{0}{\overline{\Omega}} \cap \VP: \ v_{|E} \in \VPh[E]{k}\ \quad \forall E \in \Th \}. 
\end{equation*}

The basis functions $\{\varphi_i\}_{i=1}^{\Ndof}$, with $\Ndof = \dim \VPh{k}$, related to the global space $\VPh{k}$, can be obtained by using standard gluing techniques. The following results, whose proofs are trivial, hold true.

\begin{proposition}
The Z-FEM basis functions $\{\varphi_i\}_{i=1}^{\Ndof}$ satisfy Property \ref{prop:continuity}, i.e. they are continuous across the entire PDE domain $\Omega$. 
\end{proposition}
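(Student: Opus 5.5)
The plan is to reduce global continuity to a statement about traces on a single shared face. I will show that the trace of a local basis function on a face depends only on data of that face, and then invoke standard gluing.

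Fix an interior face $f \in \Fh$ shared by two elements $E_1, E_2 \in \Th$, and a global basis function $\varphi_i$. Let $\varphi_i^{(1)}$ and $\varphi_i^{(2)}$ denote its restrictions to $E_1$ and $E_2$. These are the local functions from \eqref{eq:basisfunctions} attached to the global node $i$, or zero if $i$ is not a node of that element. Since each local function lies in $\con{0}{\overline{E}} \cap \sob{1}{E}$ and is piecewise polynomial, it suffices to prove $\varphi_i^{(1)}|_f = \varphi_i^{(2)}|_f$ for every such $f$. Continuity at edges and vertices then follows from the face statement, because edges and vertices lie on faces. Boundary faces impose no requirement.

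First I compute the trace on $f$ of a local function from \eqref{eq:basisfunctions}. The sub-tetrahedralization induces on $f$ exactly the face sub-triangulation $\TE[f]$, built from the vertices of $f$ and the point $\xx_f$. This triangulation depends only on $f$, since $\xx_f$ solves a problem posed on $f$ alone. Hence the restrictions to $f$ of the FEM Lagrange functions $\Psi_n$ are the two-dimensional Lagrange functions of $\mathbb{V}_k(f;\TE[f])$ for nodes $n$ on $\overline f$, and they vanish on $f$ otherwise. In particular, the bulk terms $\omega^E_{ij}\Psi_j$ with $j \in \virtualnodes[E]$ vanish on $f$, as do the terms coming from other faces $f' \neq f$ whose virtual nodes lie off $\overline f$. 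Therefore
\begin{equation*}
\varphi_i|_f = \Psi_i|_f + \sum_{j \in \virtualnodes[f]} \omega_{ij}^f \Psi_j|_f
\end{equation*}
for $i \in \coarsenodes[f]$, and $\varphi_i|_f = 0$ for coarse nodes not on $\overline f$.

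Second I check that every ingredient of this expression is intrinsic to $f$. The fine face nodes are the Lagrange nodes of $\TE[f]$. The splitting of $\mathcal{I}_f$ into $\mathcal{I}_f^{\mathcal{C}}$ and $\virtualnodes[f]$ is chosen from the face geometry only, as required in Section~\ref{sec:selection_nodes}; the edge and vertex nodes are fixed evenly spaced points. The weights $\mathbf{W}^f$ are the unique solution, by Proposition~\ref{prop:optproblemsolution}, of \eqref{eq:optproblem:face}. The data $\mathbf{D}^f$ and $\mathbf{V}^f$ of that problem involve only two-dimensional scaled monomials on $f$, with $\xx_f$ and $h_f$ defined from $f$, evaluated at face nodes. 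Hence both $E_1$ and $E_2$ produce the same traces, and the identity of traces follows.

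I expect the main obstacle to be making precise that the face data are truly element-independent. Two points need care. The first is that the face node indexing and the coarse/virtual splitting must be computed from $f$ alone, for instance via a canonical vertex ordering of $f$, rather than from each element's local numbering. The second is that the face monomials in \eqref{eq:constraint_elementwise:face} must be taken in a face-intrinsic coordinate frame. The minimizer is independent of the polynomial basis, because the constraint set $\mathbf{D}^f\mathbf{W}^f=\mathbf{V}^f$ is unchanged under a change of basis of $\Poly[2]{k}{f}$ (an invertible row transformation). So any two local frames yield the same weights, and I would state this explicitly.
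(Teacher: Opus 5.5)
Your proposal is correct and is essentially the argument the paper leaves implicit when it calls the proof trivial: two elements sharing a face induce the same sub-triangulation $\TE[f]$ on it, and by the requirements of Section~\ref{sec:selection_nodes} the face trace $\Psi_i|_f+\sum_{j\in\virtualnodes[f]}\omega^f_{ij}\Psi_j|_f$ is built only from face data. Your remark that the minimum-norm weights are invariant under a change of basis of $\Poly[2]{k}{f}$ is a worthwhile addition the paper does not make. For elements sharing only an edge or a vertex, it is cleaner to say directly that every virtual-node term vanishes on an edge $e$, so $\varphi_i|_e=\Psi_i|_e$ is intrinsic to $e$, rather than that this follows because edges lie on faces.
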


\begin{proposition}
   The Z-FEM basis functions $\{\varphi_i\}_{i=1}^{\Ndof}$ satisfy the partition-of-unity property, i.e. 
   \begin{equation*}
       \sum_{i = 1}^{\Ndof} \varphi_i(\xx) = 1 \quad \forall \xx \in \Omega.
   \end{equation*}
\end{proposition}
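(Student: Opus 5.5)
The plan is to reduce the statement to the polynomial-reproduction property \ref{prop:polynomial}, applied to the constant polynomial. By Proposition \ref{prop:optproblemsolution}, under the full row rank assumption on $\mathbf{D}^f$ and $\mathbf{D}^E$ (guaranteed by the $k$-unisolvence of the coarse node sets discussed in Section \ref{sec:selection_nodes}), we have $\Poly{k}{E} \subseteq \VPh[E]{k}$ for every $E \in \Th$. In particular, the constant function $1$ belongs to $\VPh[E]{k}$. The unisolvence of the degrees of freedom \ref{dof:vertices}--\ref{dof:internal} together with the Kronecker-Delta property \ref{prop:krnocker} then gives the local expansion
\begin{equation*}
    1 = \sum_{i=1}^{\Ndof[E]} 1(\xx_i^E)\, \varphi_i(\xx) = \sum_{i=1}^{\Ndof[E]} \varphi_i(\xx) \qquad \forall \xx \in \overline{E}.
\end{equation*}
One can also verify this directly from \eqref{eq:basisfunctions}. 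The first row of the constraints \eqref{eq:constraint_elementwise:face}--\eqref{eq:constraint_elementwise:bulk}, the one with $m_1 \equiv 1$, reads $\sum_{i} \omega_{in}^f = 1$ and $\sum_i \omega_{in}^E = 1$. Summing \eqref{eq:basisfunctions} over $i$ therefore yields $\sum_{n \in \finenodes[E]} \Psi_n$, which equals $1$ because $\{\Psi_n\}$ is the FEM Lagrange basis.

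Next, pass from local to global. Each global basis function $\varphi_i$ is obtained by gluing: its restriction to $E$ is either the local basis function associated with the coarse node $\xx_i$, when $\xx_i \in \overline{E}$, or zero otherwise. Fix $\xx \in \Omega$ and choose an element $E$ with $\xx \in \overline{E}$. Then
\begin{equation*}
    \sum_{i=1}^{\Ndof} \varphi_i(\xx) = \sum_{i \in \coarsenodes[E]} \varphi_{i|E}(\xx) = 1 .
\end{equation*}
The only point to check is well-definedness when $\xx$ lies on a shared face or edge. There, continuity (Property \ref{prop:continuity}) ensures that every element containing $\xx$ gives the same value, so the choice of $E$ is irrelevant.

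The main subtlety is twofold. First, the local and global numbering of the degrees of freedom must coincide on shared entities, so that no coarse node is counted twice and none is missed. Second, the face weights must depend only on the face. Both requirements are guaranteed by the face-only construction of $\mathcal{I}_f^{\mathcal{C}}$ and of \eqref{eq:optproblem:face}.
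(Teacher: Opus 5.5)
Your proposal is correct and gives essentially the argument the paper has in mind when it calls the proof trivial: constants are reproduced (equivalently, the $m_1\equiv 1$ row of the weight constraints gives unit column sums of $\mathbf{W}^f$ and $\mathbf{W}^E$), and together with the Kronecker-delta property this yields $\sum_{i\in\coarsenodes[E]}\varphi_i\equiv 1$ on each $\overline{E}$, which gluing transfers to $\Omega$. The one tacit point is that your gluing step counts the coarse nodes on $\Gamma$ among the global basis functions, so it works in the glued space without the homogeneous Dirichlet constraint; if $\VPh{k}\subset\VP=\sob[0]{1}{\Omega}$ is taken literally, the sum cannot be identically $1$ (that would force $1\in\VP$), so the statement has to be read in the boundary-condition-free space, as the authors evidently intend.
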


The Z-FEM discretization of Problem \eqref{eq:varproblem} reads as: \textit{Find $u_h \in \VPh{k}$ such that} 
\begin{equation}
    \bilin{u_h}{v_h} = \scal[\Omega]{f}{v_h} \quad \forall v_h \in \VPh{k}.
    \label{eq:discreteproblem}
\end{equation}

As in the standard FEM formulation, the existence and uniqueness of the solution of Problem \eqref{eq:discreteproblem} are inherited from the continuous framework. Moreover, it is easy to prove the following a priori estimates.
\begin{theorem}\label{prop:stimeapriori}
    Let us assume that the domain $\Omega$ is a convex polyhedron. Under Mesh Assumption \ref{ass:meshassumption}, let $u \in \sob{k+1}{\Omega}$, $k \geq 1$, be the solution of \eqref{eq:varproblem}, and let $u_h \in \VPh{k}$ be the solution of the discrete problem \eqref{eq:discreteproblem}. It holds:
    \begin{equation}
         \norm[\leb{2}{\Omega}]{u-u_h} \leq C_0h^{k+1} |u|_{\sob{k+1}{\Omega}}\quad \text{and}\quad  \norm[\sob{1}{\Omega}]{u-u_h} \leq C_{\nabla}h^{k} |u|_{\sob{k+1}{\Omega}},
         \label{eq:apriori2}
    \end{equation}
    for two positive constants $C_0$ and $C_{\nabla}$ independent of $h$.
\end{theorem}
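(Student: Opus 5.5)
Since $\VPh{k}\subset \VP$ is a conforming subspace and $\bilin{\cdot}{\cdot}$ is symmetric, continuous and coercive on $\VP$, the plan is to argue as for standard Lagrange FEM. Céa's lemma reduces the $\sob{1}{}$ estimate to a best-approximation estimate. An Aubin--Nitsche duality argument then gives the extra power of $h$ in $\leb{2}{}$. Concretely, Céa gives $\norm[\sob{1}{\Omega}]{u-u_h}\le \frac{M}{\alpha}\inf_{v_h\in\VPh{k}}\norm[\sob{1}{\Omega}]{u-v_h}$, with $M$ and $\alpha$ the continuity and coercivity constants depending on $\alpha_1,\alpha_2,\norm[\leb{\infty}{\Omega}]{\gamma}$ and the Poincaré constant. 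The core of the proof is therefore an interpolation estimate $\norm[\sob{m}{\Omega}]{u-I_hu}\le C h^{k+1-m}|u|_{\sob{k+1}{\Omega}}$ for $m=0,1$, where $I_h$ is the Z-FEM nodal interpolant defined by the degrees of freedom \ref{dof:vertices}--\ref{dof:internal}.

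\textbf{Interpolation estimate.} I would define $I_E v\coloneq\sum_{i\in\coarsenodes[E]} v(\xx_i^E)\varphi_i$ element by element. This is well defined because $\sob{k+1}{E}\subset\con{0}{\overline E}$ for $k\ge1$ in 3D, and it is globally continuous by Property \ref{prop:continuity}. By Proposition \ref{prop:optproblemsolution}, together with the $k$-unisolvence guaranteed by the choice of coarse nodes, $\Poly{k}{E}\subseteq\VPh[E]{k}$, so $I_E p=p$ for every $p\in\Poly{k}{E}$. Taking $p$ as the averaged Taylor polynomial of $u$ on the ball of radius $\rho h_E$ (Mesh Assumption \ref{ass:meshassumption}), the Bramble--Hilbert lemma gives $|u-p|_{\sob{m}{E}}\le Ch_E^{k+1-m}|u|_{\sob{k+1}{E}}$. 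Then
\begin{equation*}
|u-I_Eu|_{\sob{m}{E}}\le |u-p|_{\sob{m}{E}}+|I_E(u-p)|_{\sob{m}{E}},
\end{equation*}
and the second term is controlled by $\norm[\leb{\infty}{E}]{u-p}\sum_i|\varphi_i|_{\sob{m}{E}}$. The first factor satisfies $\norm[\leb{\infty}{E}]{u-p}\le C h_E^{k+1-3/2}|u|_{\sob{k+1}{E}}$ by a scaled Sobolev embedding. For the second factor, each $\varphi_i$ is a combination of sub-tetrahedral Lagrange functions $\Psi_n$, so $|\varphi_i|_{\sob{m}{E}}\le (1+\sum_j|\omega_{ij}|)\max_n|\Psi_n|_{\sob{m}{E}}\le C h_E^{3/2-m}$. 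Combining, $|I_E(u-p)|_{\sob{m}{E}}\le Ch_E^{k+1-m}|u|_{\sob{k+1}{E}}$; summing over elements gives the global interpolation bound and hence the $\sob{1}{}$ estimate.

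\textbf{Main obstacle.} The delicate step is bounding the constants uniformly in $h$. This needs three facts:
\begin{itemize}
\item the sub-tetrahedra $T^E_{j,f}$ are shape-regular;
\item the number of vertices and faces per element is bounded;
\item the virtual weights are bounded.
\end{itemize}
For the first two, I would use Mesh Assumption \ref{ass:meshassumption}: the points $\xx_E$ and $\xx_f$ are centers of balls and disks of radius comparable to $h_E$ and $h_f$, and edges satisfy $h_e\ge\rho^2h_E$. For the weights, I would work on a reference configuration obtained by scaling by $h_E$. There the scaled monomials make $\mathbf D^\ast$ and $\mathbf V^\ast$ scale-invariant, and the minimum-norm solution $\mathbf W=\mathbf D^T(\mathbf D\mathbf D^T)^{-1}\mathbf V$ is bounded provided the smallest singular value of $\mathbf D^\ast$ stays away from zero. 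The paper guarantees only full rank, not a quantitative bound, so I would either add this as an assumption or argue by compactness: under Mesh Assumption \ref{ass:meshassumption}, the scaled configurations range over a compact set on which unisolvence holds, so $\sigma_{\min}$ attains a positive minimum. This compactness argument is the step I expect to require the most care.

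\textbf{$\leb{2}{}$ estimate.} Since $\Omega$ is convex, the dual problem $\mathcal B(v,z)=\scal[\Omega]{u-u_h}{v}$ for all $v\in\VP$ satisfies $\norm[\sob{2}{\Omega}]{z}\le C\norm[\leb{2}{\Omega}]{u-u_h}$; I would assume $\D$ is Lipschitz, as is standard for this elliptic regularity. Galerkin orthogonality gives
\begin{equation*}
\norm[\leb{2}{\Omega}]{u-u_h}^2=\mathcal B(u-u_h,z-I_hz)\le M\norm[\sob{1}{\Omega}]{u-u_h}\,Ch|z|_{\sob{2}{\Omega}},
\end{equation*}
using the interpolation estimate above with $k=1$. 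Dividing by $\norm[\leb{2}{\Omega}]{u-u_h}$ and inserting the $\sob{1}{}$ bound yields the $h^{k+1}$ rate.
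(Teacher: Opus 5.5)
Your argument is the standard one the paper invokes when it defers to Proposition 6 of the two-dimensional Z-FEM paper: C\'ea's lemma, then an interpolation estimate built from $\Poly{k}{E}\subseteq\VPh[E]{k}$ via Bramble--Hilbert and a scaled bound on the Z-FEM interpolant, then Aubin--Nitsche duality on the convex domain, and you rightly make explicit the requirements that the paper's one-line proof leaves implicit (Lipschitz diffusion tensor for $H^2$-regularity, shape-regular sub-tetrahedra, uniformly bounded virtual weights). For the weights, state the bound as an explicit assumption rather than rely on compactness: $\xx_E$ and $\xx_f$ are LP maximizers that need not be unique, and the coarse-node selection (heuristic or pivoted QR) is discrete, so neither varies continuously with the geometry; hence the smallest singular value of the selected $\mathbf{D}^f$, $\mathbf{D}^E$ need not be lower semicontinuous over the compact family of scaled elements, and a positive minimum is not guaranteed.
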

\begin{proof}
The proof strictly follows that of Proposition 6 in \cite{NevaTeora2025}.
\end{proof}


\section{Numerical Experiments}
\label{sec:numericalexperiments}

In this section, we present some numerical results that aim to prove the polynomial reproduction property of Z-FEM and the theoretical estimates presented in the previous section. In these tests, for the ease of implementation, we always adopt the heuristic strategy on faces and the QR-strategy in the bulk of the element. Moreover, all these numerical experiments are performed using the C++ library \href{https://www.polydim.it/}{\texttt{PolyDiM}} \cite{Polydim}.

\begin{remark}[Note on implementation]
    If the QR-strategy is also applied on faces, all the internal coarse nodes associated with faces are computed once and for all, guaranteeing $\con{0}{}$-conformity independently of the particular polyhedron from which each face is seen. This ensures that the QR-strategy is not affected by the different accuracy with which face geometry may be computed depending on the starting polyhedron, while also speeding up the code. Only afterwards, the remaining internal polyhedron coarse nodes are computed.
\end{remark}

\subsection{Test 1: Patch test}

\begin{table}[!ht]
\centering
  \caption{Test 1: Polynomial approximation errors \eqref{eq:polapproxerrors} related to different polyhedra for different method orders.} 
  \label{tab:test1:singularvalues}
\resizebox{\textwidth}{!}{
  \begin{tabular}
      {ccc}
      \includegraphics[width=1in]{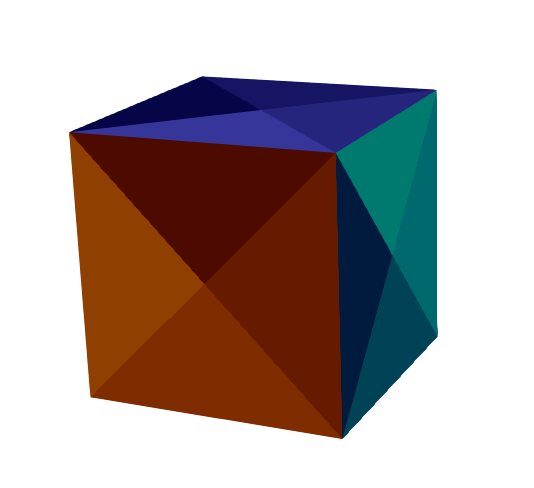} &
      \includegraphics[width=1in]{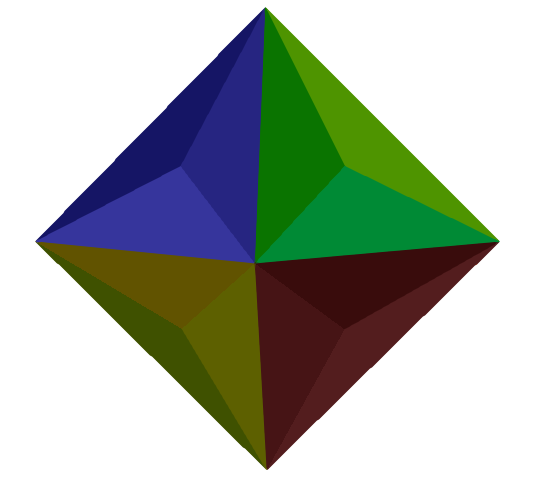} &
      \includegraphics[width=1in]{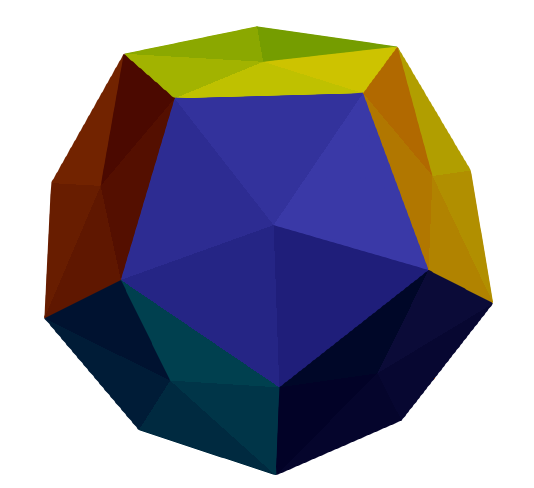}\\
      \multicolumn{1}{c}{Hexahedron} & \multicolumn{1}{c}{Octahedron} & \multicolumn{1}{c}{Dodecahedron} \\
      $\Nv[E] = 8,\ \Ne[E] = 12,\ \Nf[E] = 6$ & $\Nv[E] = 6,\ \Ne[E] = 12,\ \Nf[E] = 8$ & $\Nv[E] = 20,\ \Ne[E] = 30,\ \Nf[E] = 12$ \\
\begin{tabular}{ccc}
$k$ & $\mathrm{err}_{I,0}$ & $\mathrm{err}_{I,\nabla}$ \\
1 & 2.3172e-16 & 4.5862e-16 \\
2 & 5.4006e-15 & 1.8264e-14 \\
3 & 4.8046e-15 & 5.7472e-14 \\
4 & 3.5438e-14 & 5.1822e-13 \\
5 & 1.0818e-13 & 1.4237e-12   
\end{tabular} &
      \begin{tabular}{ccc}
$k$ & $\mathrm{err}_{I,0}$ & $\mathrm{err}_{I,\nabla}$ \\
1 & 2.0549e-16	& 1.1113e-15	\\
2 & 1.6568e-15	& 6.2162e-15	\\
3 & 1.5208e-15	& 1.1168e-14	\\
4 & 4.1500e-14	& 3.0786e-13	\\
5 & 6.7266e-14	& 9.3505e-13	
\end{tabular} & 
      \begin{tabular}{ccc}
$k$ & $\mathrm{err}_{I,0}$ & $\mathrm{err}_{I,\nabla}$ \\
1	& 4.7072e-16	& 8.9155e-16	\\
2	& 1.9983e-14	& 1.4892e-13	\\
3	& 1.1777e-13	& 5.3084e-13	\\
4	& 3.7154e-12	& 5.5668e-11	\\
5	& 1.4021e-12	& 2.8300e-11	
\end{tabular} \\
      \includegraphics[width=1in]{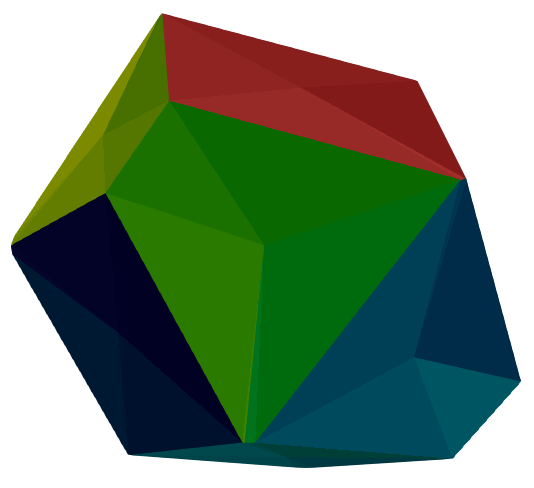} &
      \includegraphics[width=1in]{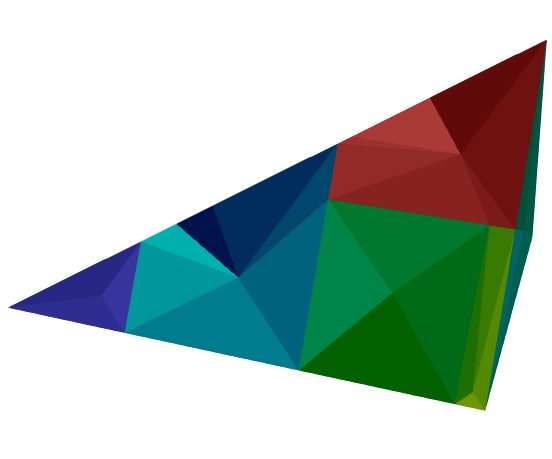} &
      \includegraphics[width=1in]{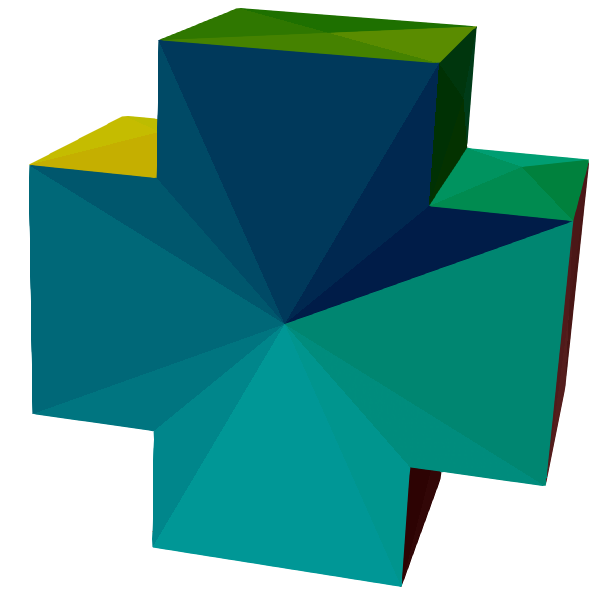}\\
      \multicolumn{1}{c}{Voronoi} & \multicolumn{1}{c}{Conformed} &\multicolumn{1}{c}{\begin{tabular}[c]{@{}c@{}}Cross\end{tabular}}  \\
      $\Nv[E] = 28,\ \Ne[E] = 42,\ \Nf[E] = 16$ & 
      $\Nv[E] = 18,\ \Ne[E] = 26,\ \Nf[E] = 10$ 
      & $\Nv[E] = 24,\ \Ne[E] = 36,\ \Nf[E] = 14$  \\
      \begin{tabular}{ccc}
$k$ & $\mathrm{err}_{I,0}$ & $\mathrm{err}_{I,\nabla}$ \\
1&2.6003e-17	&9.6614e-16	\\
2&4.8637e-16	&5.0216e-14	\\
3&1.6996e-15	&1.9487e-13	\\
4&2.1091e-13	&2.1827e-11	\\
5&2.4532e-13	&6.8499e-11
\end{tabular} &
      \begin{tabular}{ccc}
$k$ & $\mathrm{err}_{I,0}$ & $\mathrm{err}_{I,\nabla}$ \\
1&1.1087e-17	&2.1595e-15	\\
2&8.0430e-17	&5.1851e-15	\\
3&1.1119e-15	&7.7572e-14	\\
4&2.4173e-14	&2.2749e-12	\\
5&2.4827e-13	&2.5746e-11	
\end{tabular} &
      \begin{tabular}{ccc}
$k$ & $\mathrm{err}_{I,0}$ & $\mathrm{err}_{I,\nabla}$ \\
1	&4.2149e-16&	1.1398e-15\\	
2	&6.0926e-15&	2.4533e-14\\	
3	&1.2734e-14&	7.8666e-14\\	
4	&8.6864e-13&	3.9074e-12\\	
5	&2.4456e-13&	4.0051e-12	
\end{tabular} \\
  \end{tabular}
  }
\end{table}

The first test we propose aims to show how accurately the local basis functions are able to reproduce polynomials. For this purpose, we consider a set of different polyhedra and, on each polyhedron, we compute the following polynomial approximation errors
\begin{equation}
    \mathrm{err}_{I,0} = \max_{\alpha =1, \dots, n_k^3} \norm[\leb{2}{E}]{m_{\alpha}^3 - \mathcal{I}_k^E m_{\alpha}^3},\quad \mathrm{err}_{I,\nabla} = \max_{\alpha =1, \dots, n_k^3} \norm[\leb{2}{E}]{\nabla m_{\alpha}^3 - \nabla \mathcal{I}_k^E m_{\alpha}^3},
    \label{eq:polapproxerrors}
\end{equation}
where the interpolator $\mathcal{I}_k^E: \con{0}{\overline{E}} \cap \sob{1}{E} \to \VPh[E]{k}$ is defined as
\begin{equation*}
    \mathcal{I}_k^E v = \sum_{i \in \coarsenodes[E]} v (\xx_i^E) \varphi_i \quad \forall v \in \con{0}{\overline{E}} \cap \sob{1}{E}.
\end{equation*}

In Table~\ref{tab:test1:singularvalues}, we show six examples extracted from the tested cases. For each one, we draw in the top part the considered polyhedron, with colors that highlight the underlying tetrahedralization, and report the number of vertices, edges, and faces. The tested examples include polyhedra sampled from different classes: three Platonic solids, namely the hexahedron, the octahedron, and the icosahedron, an element extracted from a random Voronoi mesh, one element extracted by a \textit{conformed} mesh, created in the context of Discrete Fracture and Matrix applications \cite{Teora2024}, which is available in the PolyDiM mesh dataset, and which is characterized by aligned edges and faces, and a star-shaped concave element, representing a cross-shaped element.

Along with the polyhedra pictures, we report in Table \ref{tab:test1:singularvalues} the polynomial reproducibility errors $\mathrm{err}_{I,0}$ and $\mathrm{err}_{I,\nabla}$, defined in \eqref{eq:polapproxerrors}, for polynomial orders $k = 1, \dots, 5$. In all tested cases, both errors are found to be very small, with the greatest values on the order of $1.0e\rm{-}11$, demonstrating that the proposed shape functions can approximate polynomials with very high accuracy. Moreover, we emphasize that the matrices $\mathbf{D}$ related to these experiments have always full rank, thereby confirming that the assumptions of Proposition~\ref{prop:optproblemsolution} are satisfied for both faces and bulk matrices.

\subsection{Test 2: Convergence rates}

In this example, we solve the Problem \eqref{eq:varproblem} over the unit cube $\Omega = [0,1]^3$, by setting
\begin{equation}
    \D(\xx) = e^{x + y + z} \quad \gamma(\xx) = xyz \quad \forall \xx = (x,y,z) \in \Omega.
\end{equation}

We choose the forcing term and the Dirichlet boundary condition in accordance with the exact solution:
\begin{equation*}
    u(\xx) = \sin(\pi x) \sin(\pi y) \sin(\pi z).
\end{equation*}

In this test case, we assess the performance of the proposed method by solving this problem on different families of meshes. For each family of meshes, we compute the following standard errors:
\begin{equation}
    \mathrm{err}_0 = \norm[\leb{2}{\Omega}]{u - u_h},\qquad \mathrm{err}_{\nabla} = \norm[\leb{2}{\Omega}]{\nabla u - \nabla u_h},
    \label{eq:errors}
\end{equation}
as the mesh size $h$ decreases and for $k=1,\dots, 5$.

\begin{figure}[!ht]
    \centering
    \begin{subfigure}{0.4\textwidth}
        \includegraphics[width=1\linewidth]{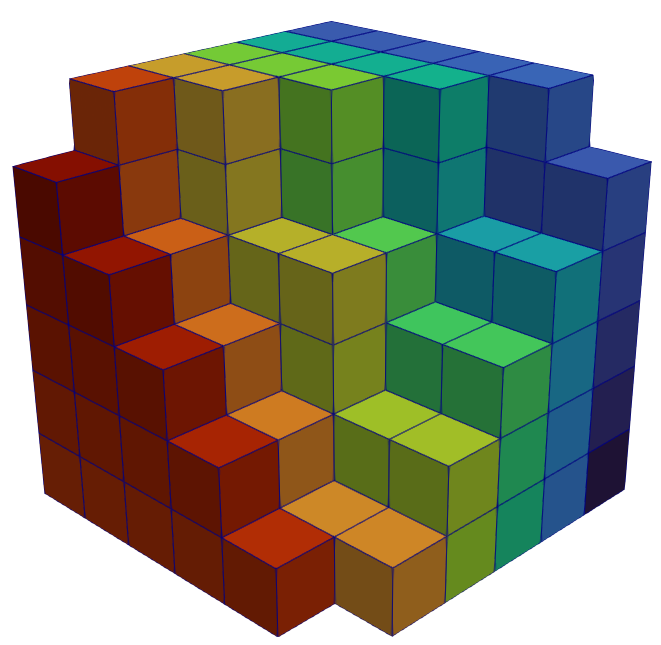}
        \caption{Cubic.}
    \end{subfigure}
    \begin{subfigure}{0.4\textwidth}
        \includegraphics[width=1\linewidth]{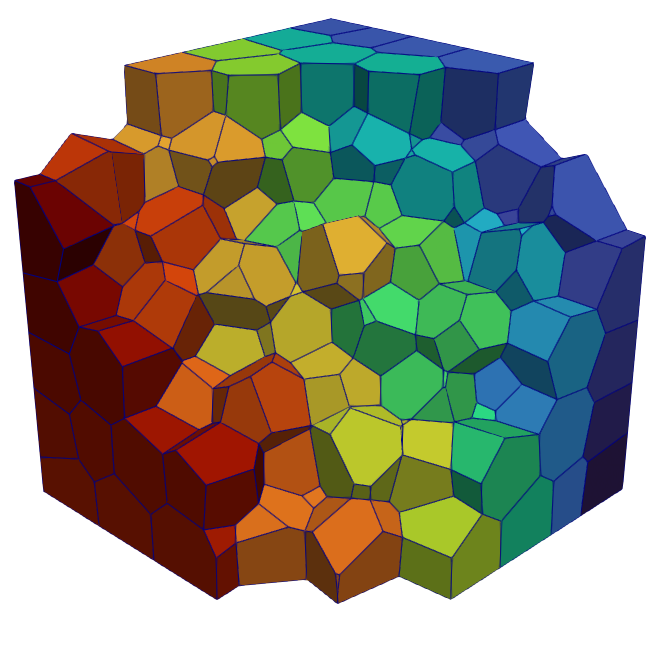}
        \caption{Random Voronoi.}
    \end{subfigure}
    \begin{subfigure}{0.4\textwidth}
        \includegraphics[width=1\linewidth]{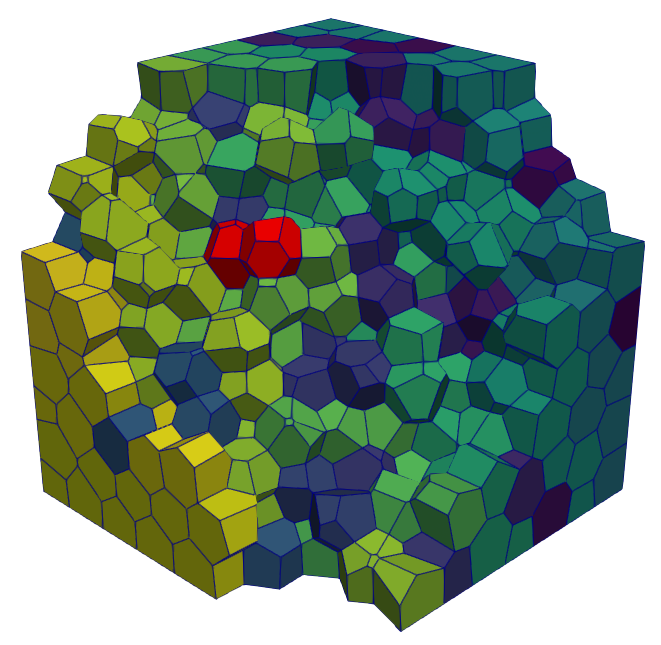}
        \caption{Agglomerated Random Voronoi.\\ In red, element 239.}
    \end{subfigure}
    \begin{subfigure}{0.4\textwidth}
        \includegraphics[width=1\linewidth]{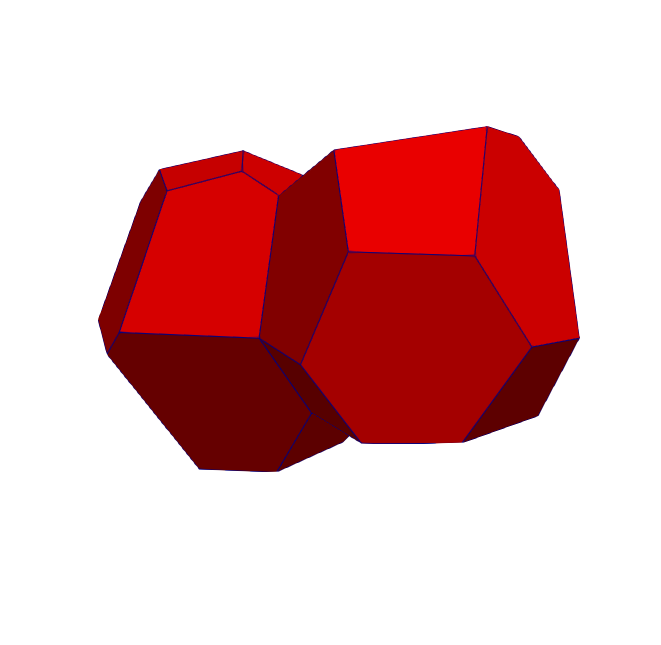}
        \caption{Zoom-in on element 239.}
    \end{subfigure}
    \caption{A cross-section of the latest refinements for each mesh family.}
    \label{fig:test2:mesh}
\end{figure}

Specifically, we consider the following three families of meshes, each made up of 5 refinements:
\begin{itemize}
\item A family of Cubic meshes;
\item A family of Random Voronoi meshes generated using \cite{Voro};
\item A family of Agglomerated Random Voronoi meshes. The agglomeration is performed by preserving the star-shaped properties for all the elements.
\end{itemize}
Figure~\ref{fig:test2:mesh} shows a cross-section of the finest mesh for each family.

\begin{figure}[!ht]
    \centering
    \hspace{-30pt}\begin{subfigure}{0.42\textwidth}
        \includegraphics[width=1.2\linewidth]{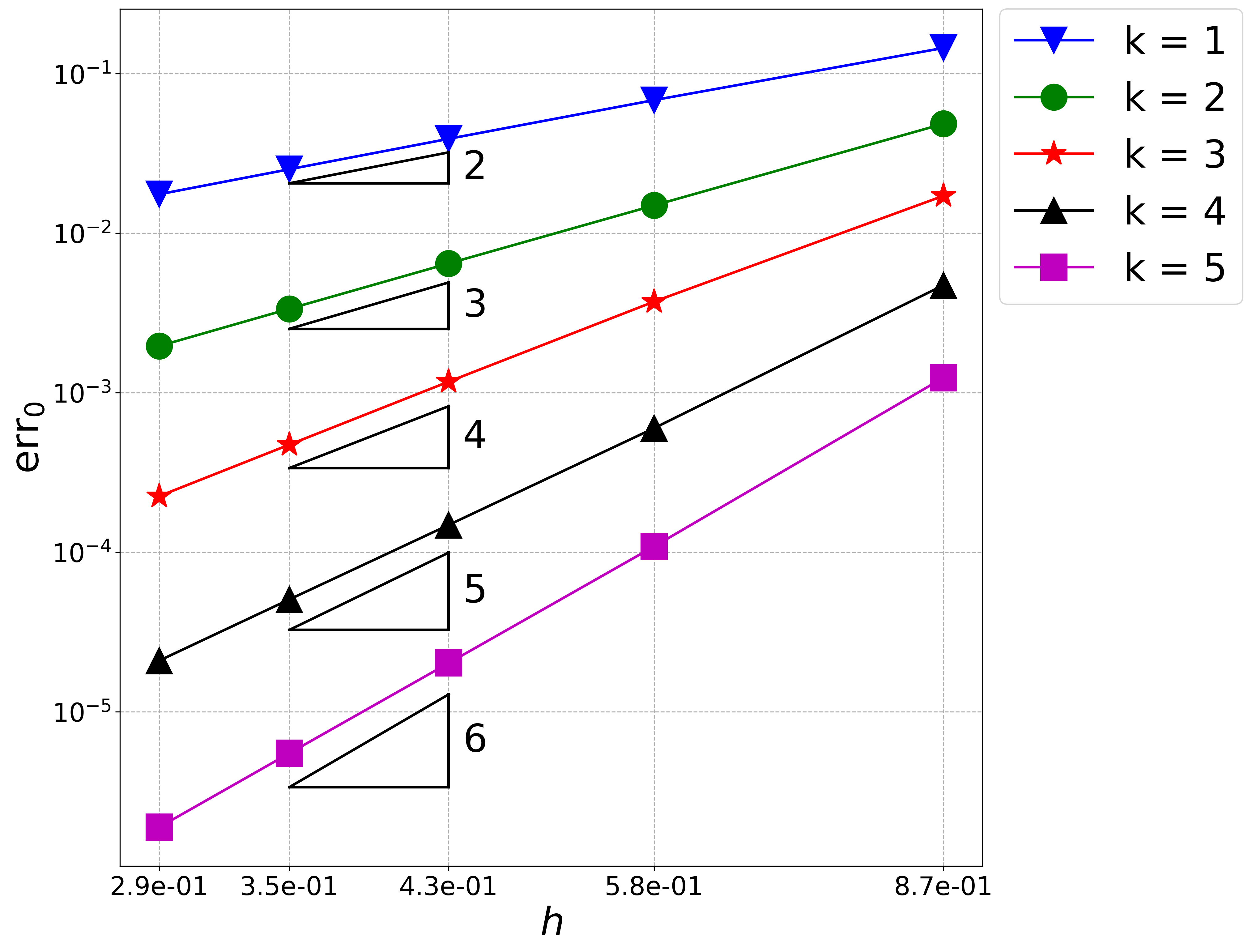}
        \caption{}
    \end{subfigure}\hspace{-12pt}
    \begin{subfigure}{0.42\textwidth}
        \includegraphics[width=1.2\linewidth]{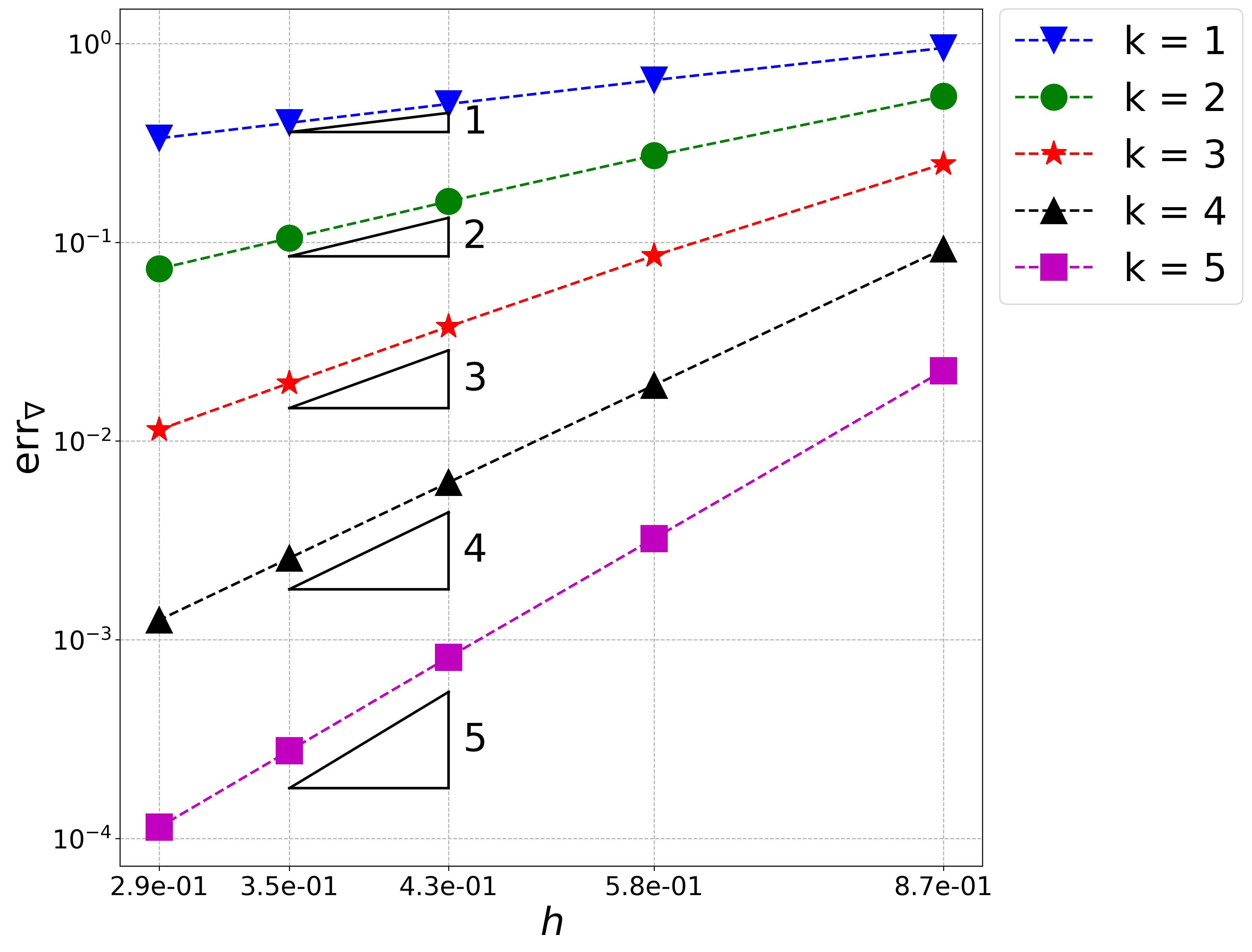}
        \caption{}
    \end{subfigure}\\
    \hspace{-30pt}\begin{subfigure}{0.42\textwidth}
        \includegraphics[width=1.2\linewidth]{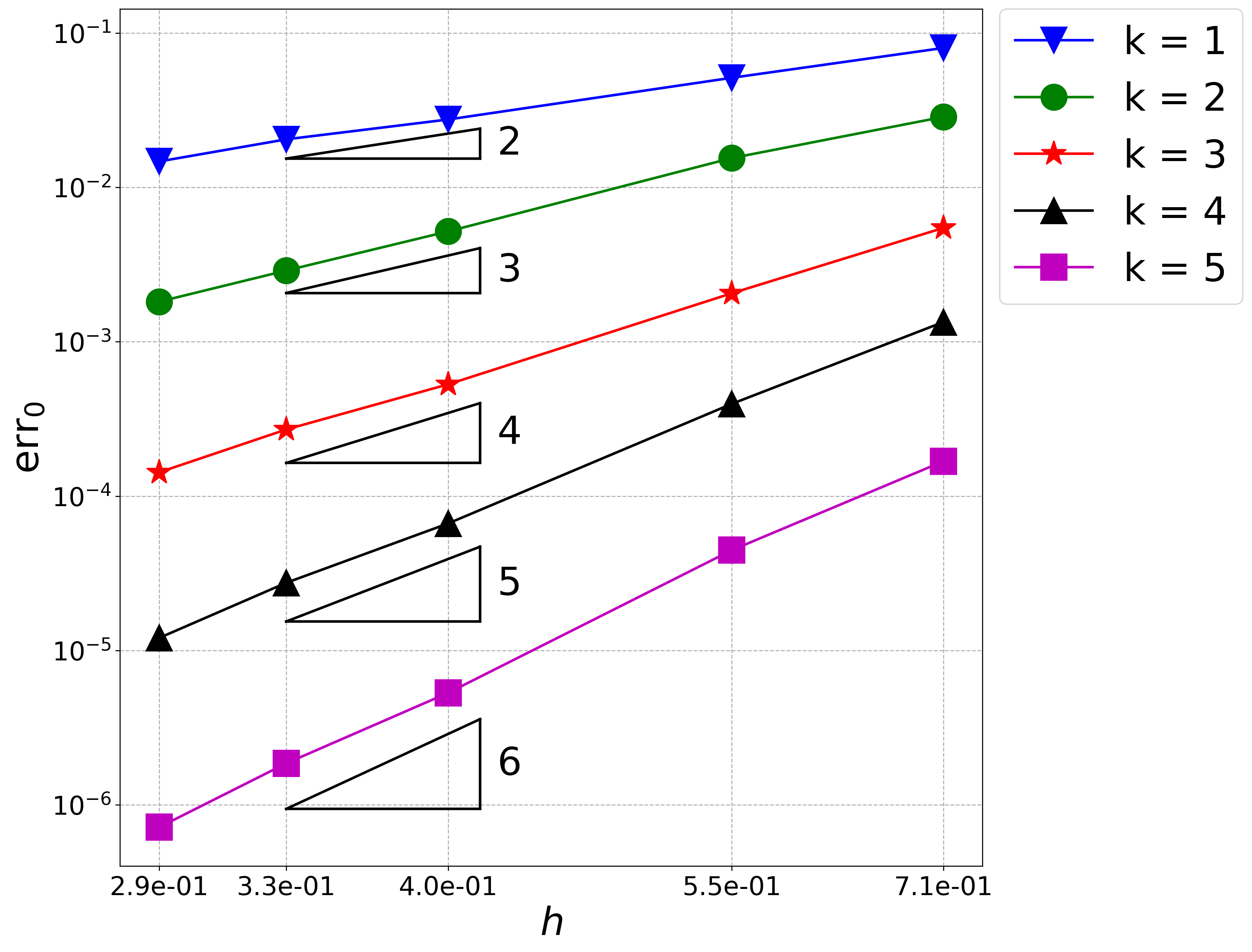}
        \caption{}
    \end{subfigure}\hspace{-12pt}
    \begin{subfigure}{0.42\textwidth}
        \includegraphics[width=1.2\linewidth]{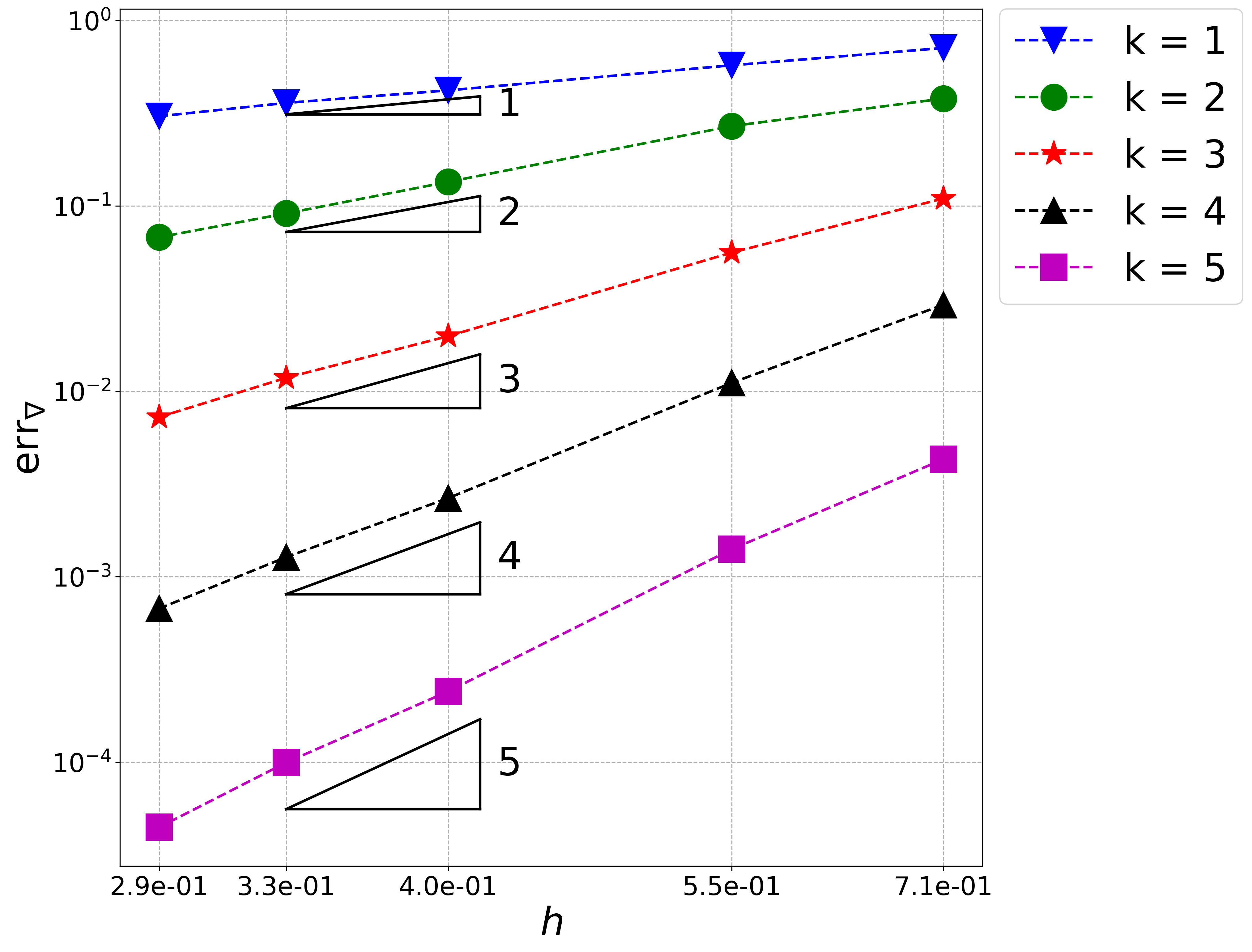}
        \caption{}
    \end{subfigure}\\
    \hspace{-30pt}\begin{subfigure}{0.42\textwidth}
        \includegraphics[width=1.2\linewidth]{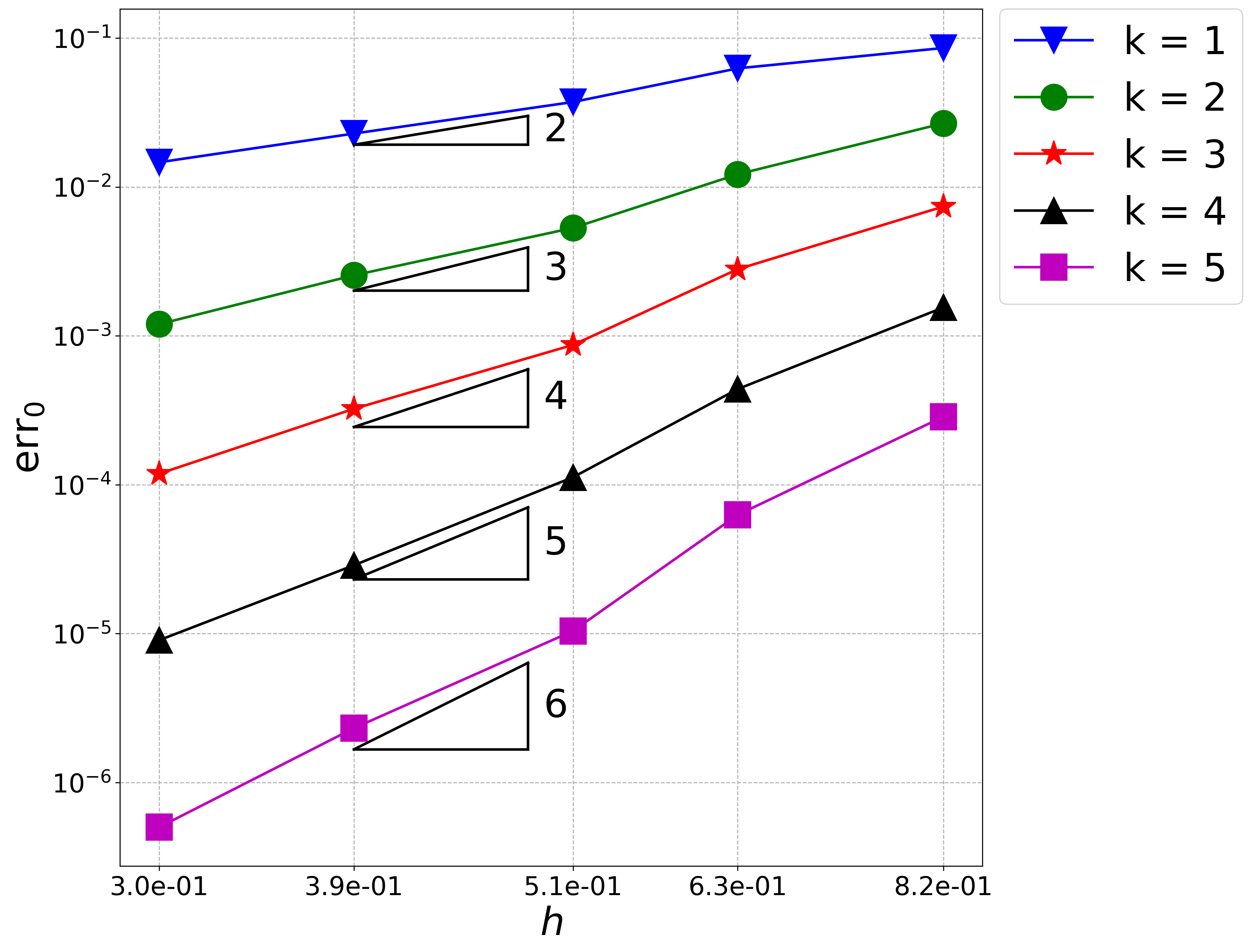}
        \caption{}
    \end{subfigure}\hspace{-12pt}
    \begin{subfigure}{0.42\textwidth}
        \includegraphics[width=1.2\linewidth]{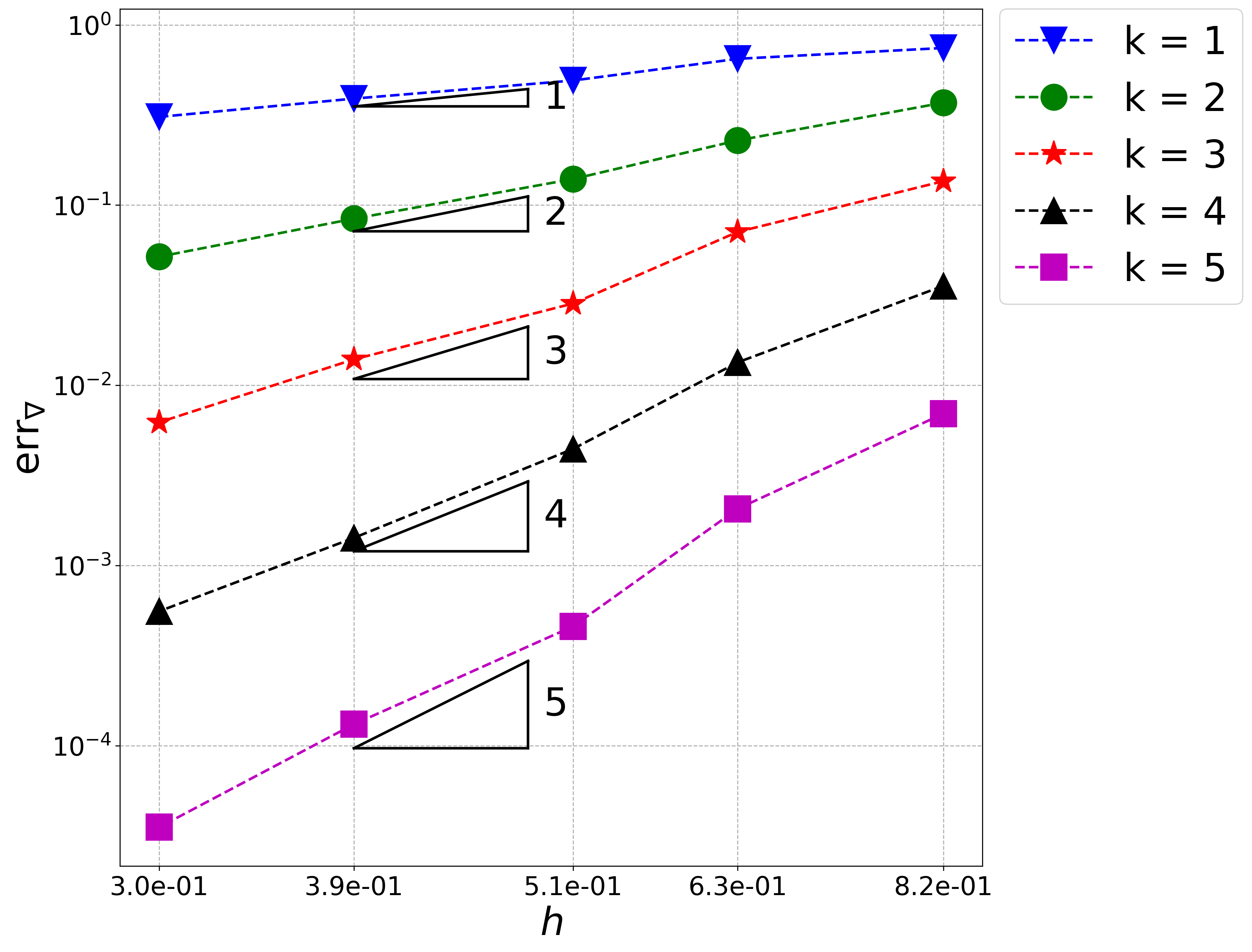}
        \caption{}
    \end{subfigure}
    \caption{Behavior of errors \eqref{eq:errors} for as $h$ decreases, for $k=1,\dots,5$. First row: Cubic. Second row: Random Voronoi. Third row: Agglomerated Random Voronoi.}
    \label{fig:test2:errors}
\end{figure}

Figure~\ref{fig:test2:errors} illustrates the behavior of the errors defined in \eqref{eq:errors} as the mesh size $h$ decreases, for $k=1,\dots,5$ and for each family of meshes. Table~\ref{tab:test2:rate}, instead, reports the corresponding empirical orders of convergence, denoted by $\alpha_0$ and $\alpha_{\nabla}$ for the $\leb{2}{}$- and $\sob{1}{}$-errors, respectively. Both errors decay with an empirical order of convergence that matches the theoretical one, for each order of the method and independently of the considered family of meshes. 

Finally, we consider the DOFs reduction obtained by applying the Z-FEM procedure instead of a standard FEM on the sub-tetrahedralization. The percentage representing this DOFs reduction, denoted here by $\mathrm{gain}$, is defined as
\begin{equation}
    \rm{gain} = \frac{\Ndof(\TE) - \Ndof}{\Ndof(\TE)} 100,
    \label{eq:gain}
\end{equation}
where $\Ndof(\TE)$ represents the number of degrees of freedom associated with the FEM defined over the union of sub-tetrahedralizations. These values are reported in Table~\ref{tab:test2:rate} for the finest mesh of each family and for each order. The higher the order $k$, the greater is the saving in terms of DOFs.

\begin{table}[!ht]
\centering
\caption{Empirical orders of convergence $\alpha_0$ and $\alpha_{\nabla}$ related to the $\leb{2}{}$- and $\sob{1}{}$-errors and the gain \eqref{eq:gain} of DOFs related to the finest refinement for each mesh family. }
\label{tab:test2:rate}
\begin{tabular}{@{}l|ccc|ccc|ccc@{}}
    & \multicolumn{3}{c|}{\textbf{Cubic}}          & \multicolumn{3}{c|}{\textbf{Random Voronoi}} & \multicolumn{3}{c}{\textbf{Agglomerated Random Voronoi}} \\ \midrule
$k$ & $\alpha_0$ & $\alpha_{\nabla}$ & $\rm{gain}$ & $\alpha_0$ & $\alpha_{\nabla}$ & $\rm{gain}$ & $\alpha_0$     & $\alpha_{\nabla}$     & $\rm{gain}$     \\ \midrule
1   & 1.92       & 0.95              & 73.92       & 1.85       & 0.93              & 57.81       & 1.81           & 0.91                  & 54.99           \\
2   & 2.92       & 1.82              & 85.14       & 3.10       & 1.95              & 81.44       & 3.10           & 1.97                  & 80.35           \\
3   & 3.95       & 2.81              & 88.98       & 4.02       & 3.01              & 88.19       & 4.16           & 3.11                  & 87.69           \\
4   & 4.92       & 3.91              & 90.83       & 5.22       & 4.19              & 91.19       & 5.20           & 4.22                  & 90.98           \\
5   & 5.9        & 4.81              & 91.91       & 6.08       & 5.08              & 92.85       & 6.38           & 5.31                  & 92.82           \\ \bottomrule
\end{tabular}
\end{table}

\section{Conclusion}\label{sec:conclusion}

The paper presents the three-dimensional extension of the high-order Zipped-Finite Element Method. In particular, it illustrates techniques for the appropriate selection of nodal degrees of freedom within polyhedral elements and for the construction of basis functions that are $\con{0}{}$-conformed and are able to reproduce polynomials up to the method order. Numerical experiments are conducted on both convex and concave meshes, and the results validate the theoretical findings.

Future research will focus on the application of the method to more realistic problems, where Z-FEM may benefit from combining its FEM-derived theoretical framework with the advantages offered by polytopal methods. 

\section*{Acknowledgments}

The author S.B. kindly acknowledges partial financial support provided by European Union through project Next Generation EU, M4C2, PRIN 2022 PNRR project P2022BH5CB\_001 ``Polyhedral Galerkin methods for engineering applications to improve disaster risk forecast and management: stabilization-free operator-preserving methods and optimal stabilization methods''. The author G.T. kindly acknowledges the financial support provided by INdAM-GNCS Project ``Metodi numerici politopali stabilization-free e neural-based per problemi accoppiati e non lineari'' (CUP: E53C25002010001). The authors S.B. and G.T. kindly acknowledge the financial support provided by the European Union under the ``BANDO AGGREGAZIONI R\&S - TRANSIZIONE ECOLOGICA'' project ``OPTIMOLD, Ottimizzazione Integrata per lo Stampaggio Plastico Sostenibile'' (CUP: B19J25000450007).

\bibliographystyle{IEEEtranDOI}
\bibliography{biblio.bib}

\end{document}